\setlist{itemsep=-0.5pt,topsep=.6ex,leftmargin=5ex} 
\theoremstyle{definition}
\newtheorem{theorem}{Theorem}
\newtheorem{proposition}[theorem]{Proposition}
\newtheorem{definition}{Definition}
\newtheorem{example}{Example}
\def\<{\langle}
\def\>{\rangle}
\def\...{,\ldots,}
\def\refeq#1{{(\ref{eq:#1})}}
\def\sgn{\mathop{\rm sgn}}
\def\conv{\mathop{\rm conv}}
\def\cone{\mathop{\rm cone}}
\def\rint{\mathop{\rm rint}}
\def\cons{\mathop{\rm cons}}
\def\argmax{\mathop{\rm argmax}}
\def\argmin{\mathop{\rm argmin}}
\def\d{{\rm d}}
\def\bb#1{\mathbb{#1}} 
\title{On Coordinate Minimization of Convex Piecewise-Affine Functions}
\author{Tom\'a\v s Werner\\[1ex]
Dept.\ of Cybernetics, Faculty of Electrical Engineering, Czech Technical Univ.\ in Prague\\[1ex]
Research Report CTU--CMP--2017--05}
\date{September 14, 2017}
\begin{document}

\maketitle

\begin{abstract}
A popular class of algorithms to optimize the dual LP relaxation of
the discrete energy minimization problem (a.k.a.\ MAP inference in
graphical models or valued constraint satisfaction) are convergent
message-passing algorithms, such as max-sum diffusion, TRW-S, MPLP and
SRMP. These algorithms are successful in practice, despite the fact
that they are a version of coordinate minimization applied to a convex
piecewise-affine function, which is not guaranteed to converge to a
global minimizer. These algorithms converge only to a local minimizer,
characterized by local consistency known from constraint
programming. We generalize max-sum diffusion to a version of
coordinate minimization applicable to an arbitrary convex
piecewise-affine function, which converges to a local consistency
condition. This condition can be seen as the sign relaxation of the
global optimality condition.
\end{abstract}

\section{Introduction}

{\em Coordinate minimization\/} is an iterative method for
unconstrained optimization, which in every iteration minimizes the
objective function over a single variable while keeping the remaining
variables fixed. For some functions, this simple method converges to a
global minimum. This class includes differentiable convex functions
\cite[\S2.7]{Bertsekas99} and convex functions whose
non-differentiable part is separable~\cite{Tseng:2001}. For general
non-differentiable convex functions, the method need not converge to a
global minimum. 

For large-scale non-differentiable convex optimization, coordinate
minimization may be an acceptable option despite its inability to find
a global minimum. An example is dual LP relaxations of some NP-hard
combinatorial optimization problems, such as {\em discrete energy
  minimization\/} \cite{Szeliski06,Kappes-study-2015} (also known as
{\em MAP inference in graphical models\/}~\cite{Wainwright08} or {\em
  valued constraint satisfaction\/}
\cite{Meseguer06,Zivny-VCSPbook-2012}) and also some other
problems~\cite{Swoboda-CVPR-2017}. This dual LP relaxation leads to
the unconstrained minimization of a convex piecewise-affine function.
A number of algorithms for solving this dual LP relaxation have been
proposed. One class of algorithms, sometimes referred to as {\em
  convergent message passing\/}
\cite{Wainwright08,Globerson08,Hazan-UAI08,Meltzer-UAI-2009}, consists
of various forms of (block-)coordinate minimization applied to various
forms of the dual. Examples are max-sum diffusion
\cite{Kovalevsky-diffusion,Schlesinger-2011,Werner-PAMI07,Werner-PAMI-2010},
TRW-S~\cite{Kolmogorov06}, MPLP~\cite{Globerson08,Sontag-UAI-2012},
and SRMP~\cite{Kolmogorov-PAMI-2015}. Besides these, several
large-scale convex optimization methods converging to a global minimum
have been applied to the problem, such as subgradient methods
\cite{Schlesinger07-subgrad,Komodakis-PAMI-2011}, bundle
methods~\cite{Savchynskyy-CVPR-2012}, alternating direction method of
multipliers~\cite{Martins:ICL:2011}, and adaptive diminishing
smoothing~\cite{SavchynskyySKS12}. For practical problems from
computer vision, it was observed~\cite{Kappes-study-2015} that
convergent message-passing methods converge faster than these global
methods and their fixed points are often not far from global minima,
especially for sparse instances.

We ask whether convergent message-passing algorithms can be
reformulated to become applicable to a wider class of
non-differentiable convex functions than those arising from the above
dual LP relaxations. In other words, whether these algorithms can be
studied independently of any combinatorial optimization problems.  In
this paper, we make a step in this direction and generalize max-sum
diffusion to an algorithm applicable to an arbitrary convex
piecewise-affine function.



Consider an objective function in the form of a pointwise maximum of
affine functions. In this case, univariate minimizers in each
iteration of coordinate minimization need not be unique and therefore
some rule must be designed to choose a unique minimizer. We show that
for a certain natural deterministic rule, fixed points of coordinate
minimization can be poor. Imitating max-sum diffusion, we propose a
better rule which we call the {\em unique rule\/}: in each iteration,
minimize the maximum of only those affine functions that depend on the
current variable. With this rule, univariate minimizers are unique and
fixed points of the algorithm satisfy a natural condition, the {\em
  sign relaxation\/} of the true optimality condition. This can be
seen as a {\em local consistency\/} as used in constraint
programming~\cite{Bessiere06}, which is known to characterize fixed
points of many algorithms to solve the dual LP relaxation, namely
message-passing algorithms
\cite{Werner-PAMI07,Werner-PAMI-2010,Kolmogorov06,Kolmogorov-PAMI-2015}
and the algorithm \cite{Koval76,Cooper-AI-2010}.

Little is known theoretically about convergence properties of
message-passing algorithms. Although max-sum diffusion and TRW-S were
always observed to converge to a fixed point, this was never
proved. For finite-valued energy terms, it has been proved that every
accumulation point of TRW-S and SRMP satisfy local consistency
\cite{Kolmogorov06,Kolmogorov-PAMI-2015}. Under the same assumption, a
stronger result has been proved for max-sum
diffusion~\cite{Schlesinger-2011}: a quantity that measures how much
the local consistency condition is violated converges to zero (note,
this is still a weaker result that convergence to a fixed
point). Nothing is known theoretically about convergence rates of
max-sum diffusion, TRW-S, MPLP and SRMP.

We revisit the proof from~\cite{Schlesinger-2011} to show that, under
a certain assumption, during coordinate minimization with the unique
rule the above quantity converges to zero. In contrast to max-sum
diffusion, we show there are objective functions for which the
algorithm has no fixed point or even no point satisfying sign consistency.

\section{Minimizing a Pointwise Maximum of Affine Functions}

Consider a function $f{:}\ \bb R^n\to\bb R$ that is a pointwise
maximum of affine functions, i.e.,
\begin{equation}
f(x) = \max_{i\in[m]}(a_i^Tx+b_i)
\label{eq:fun}
\end{equation}
where $a_1\...a_m\in\bb R^n$ and $b_1\...b_m\in\bb R$, and we denote
$[m]=\{1\...m\}$. For brevity, we will further on abuse symbols `max'
and `argmax' and denote, for $y\in\bb R^m$,
\begin{subequations}
\begin{align}
\max y &= \max_{i\in[m]}y_i, \label{eq:max} \\
\argmax y &= \argmax_{i\in[m]} y_i 
= \{\, i\in[m] \mid y_i = \max y \,\} . \label{eq:argmax}
\end{align}
\end{subequations}
Now function~\refeq{fun}~can be written simply as
\begin{equation}
f(x) = \max(Ax+b) ,
\label{eq:fun'}
\end{equation}
where matrix $A=(a_{ij})\in\bb R^{m\times n}$ has rows
$a_1^T\...a_m^T$ and $b=(b_1\...b_m)\in\bb R^m$.


The well-known condition for a global minimum of a convex function
$f{:}\ \bb R^n\to\bb R$ is that $0\in\partial f(x)$. For
function~\refeq{fun}, the subdifferential reads
\begin{equation}
\partial f(x) = \conv\{\, a_i \mid i\in \argmax(Ax+b) \,\} .
\label{eq:fun-subdiff}
\end{equation}
In more detail, the following holds:

\begin{proposition}
For function~$f$ given by~\refeq{fun} it holds:
\begin{itemize}
\item $f$ is bounded from below iff $0\in\conv\{a_1\...a_m\}$.
\item $x$ is a minimizer of~$f$ iff $0\in\conv\{\,a_i\mid i\in\argmax(Ax+b)\,\}$.
\item If $f$~is bounded from below, it has a minimizer~$x$ such that
$0\in\rint\conv\{\,a_i\mid i\in\argmax(Ax+b)\,\}$.
\end{itemize}
\end{proposition}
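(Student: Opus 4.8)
The plan is to handle the three items in turn: the first two by elementary separation and averaging arguments, and the third --- the substantive one --- by way of linear programming duality. For the first item, ``if'' is immediate: if $0=\sum_i\lambda_ia_i$ with $\lambda\ge0$ and $\mathbf{1}^T\lambda=1$, then $f(x)\ge\sum_i\lambda_i(a_i^Tx+b_i)=\sum_i\lambda_ib_i$ for every $x$. For ``only if'' I would argue contrapositively: if $0\notin\conv\{a_1\...a_m\}$, then, this set being compact and convex, strict separation of the origin from it furnishes $d\in\bb R^n$ with $a_i^Td<0$ for all $i$, whence $f(td)=\max_i(t\,a_i^Td+b_i)\to-\infty$ as $t\to+\infty$. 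The second item is the stated optimality criterion $0\in\partial f(x)$ together with the subdifferential formula~\refeq{fun-subdiff}; it can also be argued directly --- ``if'' by the same averaging bound applied over only the indices $i\in\argmax(Ax+b)$, and ``only if'' by separating $0$ from $\conv\{a_i\mid i\in\argmax(Ax+b)\}$ to get a descent direction $d$ with $a_i^Td<0$ on the active set, along which $f$ strictly decreases for small steps because the inactive affine functions stay strictly below $f(x)$ by continuity.

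For the third item, observe first that $\mu:=\min f$ is attained: since $f$ is bounded below it is the finite optimum of the feasible linear program $\min\{t\mid a_i^Tx+b_i\le t,\ i\in[m]\}$ in variables $(x,t)$, which is therefore attained. The minimizer set is the polyhedron $X^*=\{x\mid Ax+b\le\mu\mathbf{1}\}$, so I would pick $x^*$ in its (nonempty) relative interior. By the standard fact that at a relative-interior point of a polyhedron precisely the implicit equalities hold with equality, $\argmax(Ax^*+b)=I_0:=\{i\mid a_i^Tx+b_i=\mu\ \text{for all}\ x\in X^*\}$, while $a_i^Tx^*+b_i<\mu$ for every $i\notin I_0$. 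Since $x^*$ is a minimizer, the second item gives $0\in\conv\{a_i\mid i\in I_0\}$, so all that remains is to upgrade this to $0\in\rint\conv\{a_i\mid i\in I_0\}$; that is the crux.

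To do so I would exploit that $x^*$ is a relative-interior point of $X^*$, so the set of feasible directions of $X^*$ at $x^*$ is exactly the subspace $L$ parallel to $\aff X^*$. Any $d$ with $a_i^Td\le0$ for all $i\in I_0$ is a feasible direction at $x^*$ (for small steps, using $a_i^Tx^*+b_i<\mu$ on the complement of $I_0$), hence lies in $L$; but then $-d$ is feasible too, which forces $a_i^Td\ge0$, hence $a_i^Td=0$, for every $i\in I_0$. Thus $\{d\mid a_i^Td\le0,\ i\in I_0\}$ is a linear subspace, i.e.\ the polar cone of $\cone\{a_i\mid i\in I_0\}$ is a subspace, so $\cone\{a_i\mid i\in I_0\}$ is itself a subspace; consequently each $-a_j$ ($j\in I_0$) is a nonnegative combination of $\{a_i\mid i\in I_0\}$, and summing the resulting identities over $j\in I_0$ displays $0$ as a convex combination of $\{a_i\mid i\in I_0\}$ with all coefficients strictly positive --- precisely the condition for $0\in\rint\conv\{a_i\mid i\in I_0\}$. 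As $f(x^*)=\mu$ and $\argmax(Ax^*+b)=I_0$, this $x^*$ is the required minimizer. (The same upgrade is essentially the Goldman--Tucker strict complementary slackness theorem for this linear program and its dual $\max\{b^T\lambda\mid\lambda\ge0,\ \mathbf{1}^T\lambda=1,\ A^T\lambda=0\}$, which one could just invoke instead.) I expect this step --- passing from ``$0$ lies in the hull'' to ``$0$ lies in the \emph{relative interior} of the hull'' --- to be the only genuine obstacle; everything else is separation, averaging, and routine polyhedral geometry.
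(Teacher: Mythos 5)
Your proof is correct, but it takes a genuinely different route from the paper's. The paper proves all three claims at once by recasting the minimization as the linear program \refeq{primal} with dual \refeq{dual} and then simply invoking standard LP theory: strong duality for the first item, complementary slackness for the second, and strict complementary slackness (Goldman--Tucker) for the third, translated back via \refeq{conv-lambda} and \refeq{rint-conv-lambda}. You instead handle the first two items by elementary averaging and strict separation, and for the third item you argue geometrically: pick $x^*$ in the relative interior of the optimal face $X^*=\{x\mid Ax+b\le\mu\mathbf{1}\}$, identify its active set with the implicit equalities $I_0$, and show via the feasible-direction/polar-cone argument that $\cone\{a_i\mid i\in I_0\}$ is a linear subspace, which yields a representation of $0$ as a strictly positive convex combination and hence $0\in\rint\conv\{a_i\mid i\in I_0\}$. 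This is, as you observe yourself, a self-contained re-derivation of strict complementarity for this particular primal--dual pair. What the paper's route buys is brevity, at the price of citing strict complementary slackness as a black box; what your route buys is a proof from first principles (modulo standard polyhedral facts: attainment of a bounded feasible LP, the implicit-equality characterization of the relative interior of a polyhedron, closedness of finitely generated cones and the bipolar theorem, and the strict-positive-coefficient characterization \refeq{rint-conv-lambda}, which the paper also uses) together with the extra geometric information that \emph{any} relative-interior point of the optimal face witnesses the third item. I see no gaps in your argument.
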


\begin{proof}
The minimization of function~\refeq{fun} can be written as the linear
program
\begin{equation}
\min\{\, z \mid Ax+b\le z, \; x\in\bb R^n, \; z\in\bb R \,\} .
\label{eq:primal}
\end{equation}
The dual to this linear program reads
\begin{equation}
\max\{\, b^T\lambda \mid \lambda\in\bb R^m, \; \lambda\ge0, \; 1^T\lambda=1, \; A^T\lambda=0\,\} .
\label{eq:dual}
\end{equation}
The primal~\refeq{primal} is always feasible. By strong duality,
$f$~is bounded from below iff the dual~\refeq{dual}
is feasible. By complementary slackness, $x$ and~$\lambda$ are both
optimal iff $\lambda_i=0$ for every $i\notin\argmax(Ax+b)$.  If $f$~is
bounded from below, by strict complementary slackness there exist
optimal $x$ and~$\lambda$ such that $\lambda_i>0$ iff
$i\in\argmax(Ax+b)$. Note that
$A^T\lambda=\sum_{i\in[m]}\lambda_ia_i$. Note that, for any $k\in\bb N$ and $a_1\...a_k\in\bb R^n$,
\begin{subequations}
\begin{align}
0\in\conv\{a_1\...a_k\} \quad&\Longleftrightarrow\quad \exists \lambda\in\bb R^k{:}\ \textstyle \lambda_i\ge0, \; \sum_i\lambda_i=1, \; \sum_i\lambda_ia_i=0 , \label{eq:conv-lambda} \\
0\in\rint\conv\{a_1\...a_k\} \quad&\Longleftrightarrow\quad \exists \lambda\in\bb R^k{:}\ \textstyle \lambda_i>0, \; \sum_i\lambda_i=1, \; \sum_i\lambda_ia_i=0 . \label{eq:rint-conv-lambda}
\end{align}
\end{subequations}
All three claims are now obvious.
\end{proof}





%
\begin{proposition}
\label{th:bounded-y}
A non-empty set $\{\, Ax \mid x\in\bb R^n, \; Ax\le b\,\}$ is bounded
iff $0\in\rint\conv\{a_1\...a_m\}$.
\end{proposition}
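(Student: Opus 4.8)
The plan is to reduce boundedness of $S=\{\,Ax\mid x\in P\,\}$, where $P=\{\,x\in\bb R^n\mid Ax\le b\,\}$ is the (nonempty, by hypothesis) feasible polyhedron, to a statement about each row $a_i$ separately, and then translate that statement into the claimed condition via~\refeq{rint-conv-lambda}. A subset of $\bb R^m$ is bounded iff each of its $m$ coordinate projections is bounded, and the $i$-th coordinate of $Ax$ is $a_i^Tx$; hence $S$ is bounded iff for every $i\in[m]$ the set $\{\,a_i^Tx\mid x\in P\,\}$ is a bounded subset of $\bb R$. Boundedness from above is automatic, since $a_i^Tx\le b_i$ is one of the inequalities defining $P$, so only boundedness from below is at issue.

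Next I would invoke linear programming duality for $\sup\{\,(-a_i)^Tx\mid Ax\le b\,\}$. Since $P\neq\emptyset$, strong duality gives that this supremum is finite iff the dual feasible set $\{\,\lambda\ge0\mid A^T\lambda=-a_i\,\}$ is nonempty, i.e.\ iff $-a_i\in\cone\{a_1\...a_m\}$. Therefore $S$ is bounded iff $-a_i\in\cone\{a_1\...a_m\}$ holds for every $i\in[m]$, and it remains to prove that this system of $m$ conic membership conditions is equivalent to $0\in\rint\conv\{a_1\...a_m\}$.

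For the direction ``$\Leftarrow$'': by~\refeq{rint-conv-lambda} there is $\lambda\in\bb R^m$ with $\lambda>0$, $\sum_j\lambda_j=1$ and $\sum_j\lambda_ja_j=0$; solving for $a_i$ yields $-a_i=\sum_{j\neq i}(\lambda_j/\lambda_i)a_j\in\cone\{a_1\...a_m\}$. For ``$\Rightarrow$'': for each $i$ write $-a_i=\sum_j\mu_{ij}a_j$ with all $\mu_{ij}\ge0$, rearrange to $(1+\mu_{ii})a_i+\sum_{j\neq i}\mu_{ij}a_j=0$, and normalize the (positive total) coefficient vector to obtain, for each $i$, a vector $\nu^{(i)}\ge0$ with $\nu^{(i)}_i>0$, $\sum_j\nu^{(i)}_j=1$ and $\sum_j\nu^{(i)}_ja_j=0$. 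Averaging these certificates, $\lambda:=\tfrac1m\sum_i\nu^{(i)}$ satisfies $\sum_j\lambda_j=1$, $\sum_j\lambda_ja_j=0$ and $\lambda_j\ge\tfrac1m\nu^{(j)}_j>0$ for every $j$, so~\refeq{rint-conv-lambda} gives $0\in\rint\conv\{a_1\...a_m\}$.

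The coordinate-wise reduction and the sign bookkeeping in the LP-duality step are routine; the one substantive point is the final equivalence, and within it the averaging trick in the ``$\Rightarrow$'' direction, which merges the $m$ separate conic certificates into a single convex certificate with strictly positive weights. I would also take care to use strong LP duality in the form ``primal feasible $\Rightarrow$ (primal bounded above $\iff$ dual feasible)'', which is exactly what is available here because $P$ is assumed nonempty.
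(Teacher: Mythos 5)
Your proof is correct, but it takes a genuinely different route than the paper's. The paper tests boundedness against \emph{every} linear functional: the set is bounded iff $\min\{\,c^TAx \mid Ax\le b\,\}$ is bounded for every $c\in\bb R^m$, which by LP duality amounts to feasibility of $\{\,\lambda\ge0 \mid A^T(\lambda+c)=0\,\}$ for every $c$; the quantification over $c$ is then removed in one stroke by the choice $c=1$ (the all-ones vector), which immediately yields the strictly positive certificate $\lambda'=\lambda+1$ with $A^T\lambda'=0$, and conversely by rescaling a positive certificate with a large factor $\alpha$ so that $\alpha\lambda'-c\ge0$. You instead test only the $m$ coordinate directions, note that upper bounds come for free from $Ax\le b$, and obtain from LP duality the $m$ separate conditions $-a_i\in\cone\{a_1\...a_m\}$, which you then merge into a single strictly positive convex certificate by normalizing each conic identity $(1+\mu_{ii})a_i+\sum_{j\neq i}\mu_{ij}a_j=0$ and averaging the resulting $\nu^{(i)}$. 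Both arguments rest on the same two ingredients -- LP duality for a feasible primal and the characterization \refeq{rint-conv-lambda} of $0\in\rint\conv\{a_1\...a_m\}$ -- so the difference is in how the strictly positive certificate is produced: the paper's $c=1$ trick gets it in one step, while your coordinatewise reduction plus averaging is slightly longer but more elementary and makes explicit the intermediate characterization ``bounded iff $-a_i\in\cone\{a_1\...a_m\}$ for every $i$,'' which has some independent interest. Your appeal to strong duality in the form ``primal feasible $\Rightarrow$ (bounded above $\Leftrightarrow$ dual feasible)'' is exactly the right statement, and the averaging step is sound since $\lambda_j\ge\nu^{(j)}_j/m>0$.
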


\begin{proof}
As the set is non-empty, it is bounded iff the linear program
\[
\min\{\, c^TAx \mid x\in\bb R^n, \; Ax\le b\,\}
\]
is bounded for every $c\in\bb R^m$. This holds iff the dual linear
program
\[
\max\{\,-b^T\lambda\mid A^T(\lambda+c)=0,\;\lambda\ge0\,\}
\]
is feasible for every $c\in\bb R^m$. 
We show that
\begin{equation}
\forall c\in\bb R^m \; \exists \lambda\ge0{:}\ A^T(\lambda+c)=0
\quad\Longleftrightarrow\quad
\exists \lambda'>0{:}\ A^T\lambda'=0 .
\label{eq:bounded-y-proof}
\end{equation}
\begin{itemize}
\item To prove $\Rightarrow$, take $c=1$ and let $\lambda\ge0$ be such that
$A^T(\lambda+c)=0$. Then set $\lambda'=\lambda+c=\lambda+1$.
\item To prove $\Leftarrow$, set $\lambda=\alpha\lambda'-c$ where
$\alpha>0$ is arbitrary such that $\lambda\ge0$. Such~$\alpha$ clearly exists.
\end{itemize}
Note that $\lambda'$~can be multiplied by a positive scale to satisfy
$1^T\lambda'=1$.  By~\refeq{rint-conv-lambda}, the right-hand
statement in~\refeq{bounded-y-proof} is thus equivalent to
$0\in\rint\conv\{a_1\...a_m\}$.
\end{proof}

\section{Sign Relaxation of the Optimality Condition}

We said that deciding if $x\in\bb R^n$ is a minimizer of
function~\refeq{fun} requires deciding if the convex hull of a finite
set of vectors contains the origin. Deciding this condition for large
problems may be infeasible. We define the {\em sign relaxation\/} of
this condition which is cheaper to decide, obtained by considering
only the signs of the vectors' components and dropping their
magnitudes.

To describe the key idea, suppose that the convex hull of some vectors
$a_1\...a_m\in\bb R^n$ contains the origin,
$0\in\conv\{a_1\...a_m\}$. That is, there are numbers
$\lambda_1\...\lambda_m\ge0$ such that
\begin{subequations}
\begin{align}
\sum_{i\in[m]}\lambda_i&=1 , \label{eq:0inconv:1} \\
\sum_{i\in[m]}\lambda_ia_{ij}&=0, \quad j\in[n] . \label{eq:0inconv:2}
\end{align}
\end{subequations}
Let us relax these conditions, considering only the signs
$\sigma_i=\sgn\lambda_i\in\{0,1\}$ and
$s_{ij}=\sgn a_{ij}\in\{-1,0,1\}$. Equality~\refeq{0inconv:1} implies
that at least one of the numbers $\sigma_1\...\sigma_m$ equals~1. For
each $j\in[n]$, equality~\refeq{0inconv:2} implies that the numbers
$\sigma_1s_{1j}\...\sigma_ms_{mj}$ either are all zero, or some are positive and some
negative. One way to write this is as follows:
\begin{subequations}
\label{eq:signCSP}
\begin{align}
\exists i\in[m]{:}\ \sigma_i=1 , & \label{eq:signCSP:1} \\
(\exists i\in[m]{:}\ \sigma_is_{ij}=-1) \Leftrightarrow (\exists i'\in[m]{:}\ \sigma_{i'}s_{i'j}=1) , & \quad j\in[n] . \label{eq:signCSP:2}
\end{align}
\end{subequations}
Thus, it is necessary for $0\in\conv\{a_1\...a_m\}$ that
there exist some $\sigma_1\...\sigma_m\in\{0,1\}$ satisfying~\refeq{signCSP}.

This can be seen as a {\em constraint satisfaction problem\/} (CSP)
\cite{Mackworth91,Freuder06} with $m$~binary variables and $n+1$
constraints. This particular CSP class can be solved by enforcing {\em
  (generalized) arc consistency\/}~\cite{Bessiere06}. Suppose that for
some~$j$, the signs $s_{1j}\...s_{mj}$ are, say, all non-negative and
some of them is positive. Then constraint~\refeq{signCSP:2} enforces
that for all~$i$ for which $s_{ij}\neq0$ we have
$\sigma_i=0$. Repeating this for various coordinates~$j$ progressively
sets some~$\sigma_i$ to zero. If finally $\sigma_i=0$ for all
$i\in[n]$, constraint~\refeq{signCSP} is violated and the CSP has no
solution. Otherwise, the CSP has a solution.

Since we believe that the described concept of sign relaxation might
have a wider applicability in non-differential convex optimization, we
further develop it in more detail and greater generality than is needed
in this paper. Namely, we consider the sign relaxation of the
condition $0\in\conv X$ for any (not necessarily finite) set
$X\subseteq\bb R^n$. This is straightforward because the sign
relaxation of this condition depends only on the set $\{\,\sgn a\mid
a\in X\,\}\subseteq\{-1,0,1\}^n$ which is finite.  Here, for a vector
$a=(a_1\...a_n)\in\bb R^n$, we denoted $\sgn a=(\sgn a_1\...\sgn
a_n)\in\{-1,0,1\}^n$.

\begin{theorem}
\label{th:rint}
Let $X\subseteq\bb R^n$ and $a\in\conv X$. Then
$a\in\rint\conv(X\cap F)$ where $F$~is the intersection of all faces
of $\conv X$ that contain\footnote{More precisely, it can be shown
  that $Y=X\cap F$ is the greatest (with respect to partial ordering by
  inclusion) subset of~$X$ such that $a\in\rint\conv Y$.}~$a$.
\end{theorem}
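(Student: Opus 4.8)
The plan is to reduce the statement to two classical facts about faces of convex sets, applied to $C=\conv X$. Fact~(i): every face $G$ of $C$ satisfies $G=\conv(X\cap G)$. Fact~(ii): the intersection $F$ of all faces of $C$ containing~$a$ is itself a face of~$C$ (hence the \emph{minimal} face of $C$ containing $a$), and $a\in\rint F$. Granting these, the theorem follows in one line: $F$ is a face containing~$a$, so Fact~(i) gives $\conv(X\cap F)=F$, and Fact~(ii) gives $a\in\rint F=\rint\conv(X\cap F)$.

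For Fact~(i), the inclusion $\conv(X\cap G)\subseteq G$ is trivial since $G$ is convex and contains $X\cap G$. For the reverse, take $y\in G$ and write $y=\sum_{i=1}^k\lambda_ix_i$ with $x_i\in X$, $\lambda_i>0$, $\sum_i\lambda_i=1$ (possible because $y\in C=\conv X$). If $k=1$ then $x_1=y\in X\cap G$. If $k\ge2$, set $w=(1-\lambda_1)^{-1}\sum_{i\ge2}\lambda_ix_i\in C$; then $y$ lies in the open segment between $x_1\in C$ and $w\in C$, so the defining property of a face forces $x_1\in G$ and $w\in G$. Since $w$ is a convex combination of $x_2\...x_k$ with positive coefficients and $w\in G$, induction on~$k$ yields $x_2\...x_k\in G$; hence all $x_i\in X\cap G$ and $y\in\conv(X\cap G)$.

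For Fact~(ii), note first that an arbitrary intersection of faces of~$C$ is again a face of~$C$ (immediate from the definition), and the family of faces of $C$ containing~$a$ is nonempty since it contains $C$; thus $F$ is a well-defined face of~$C$, it contains~$a$, and it is contained in every face of $C$ that contains~$a$. To see $a\in\rint F$, suppose not. Then $a$ lies on the relative boundary of the (nonempty) convex set~$F$, so there is a hyperplane~$H$ supporting~$F$ at~$a$ with $H\cap\aff F$ a proper affine subspace of $\aff F$ (supporting hyperplane theorem at a relative-boundary point). Put $F'=F\cap H$. Then $F'$ is a face of~$F$; it is nonempty ($a\in F'$) and proper (if $F\subseteq H$ then $\aff F\subseteq H$, contradicting properness of $H\cap\aff F$). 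Since a face of a face is a face, $F'$ is a face of~$C$ that contains~$a$ and is strictly smaller than~$F$, contradicting the minimality of~$F$. Hence $a\in\rint F$.

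The main content is Fact~(ii), and within it the argument that the minimal face of $C$ containing $a$ has $a$ in its relative interior; Fact~(i) and the ``intersection of faces is a face'' / ``face of a face is a face'' stability properties are routine bookkeeping. The maximality claim in the footnote follows along the same lines: if $Y'\subseteq X$ and $a\in\rint\conv Y'$, then $\conv Y'$ is a convex subset of~$C$ whose relative interior meets every face of~$C$ containing~$a$, so the face property forces $\conv Y'$ into each such face and hence into~$F$, whence $Y'\subseteq X\cap F$.
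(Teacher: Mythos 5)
Your proof is correct and follows essentially the same route as the paper's: reduce the statement to the two facts that $\conv(X\cap G)=G$ for every face $G$ of $\conv X$ and that $a\in\rint F$ for the minimal face $F$ containing $a$, the latter by contradiction via a proper subface. The only difference is one of detail: the paper asserts both facts without proof (and the existence of a proper subface through a relative-boundary point), whereas you supply complete arguments for them — the induction for $\conv(X\cap G)=G$, the supporting-hyperplane construction of the subface together with face-transitivity, and also the footnote's maximality claim.
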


\begin{proof}
It can be shown that for any face~$F$ of $\conv X$ it holds that\footnote{We
  omit the proof of this claim. The proof is obvious if $X$~is finite
  and hence $\conv X$ is a convex polytope. For infinite~$X$, recall
  \cite{Hiriart-book-2004} that a {\em face\/} of a convex set~$C$ is
  a convex set $F\subseteq C$ such that every line segment from~$C$
  whose relative interior has a non-empty intersection with~$F$ is
  contained in~$F$.}  $\conv(X\cap F)=F$.  So it suffices to show that
$a\in\rint F$. For contradiction, suppose $a$~is a relative boundary
point of~$F$. But every relative boundary point of a face is contained
in some subface of the face, so $F$~cannot be the intersection of all
faces of $\conv X$ containing~$a$.
\end{proof}





%
%
%

\begin{definition}
\label{def:SC}
A set $S\subseteq\{-1,0,1\}^n$ is {\em consistent in
  coordinate\/} $j\in[n]$ if it holds that
\begin{equation}
\exists s\in S{:}\ s_j=-1 \quad\Longleftrightarrow\quad \exists t\in S{:}\ t_j=1 .
\label{eq:SC}
\end{equation}
Set~$S$ is {\em consistent\/} if it is consistent in
every coordinate $j\in[n]$.
\end{definition}

In particular, note that the sets $\emptyset$ and $\{0\}$ (where 0~denotes
the vector with $n$~zeros) are consistent.

\begin{theorem}
\label{th:rint-scons}
Let $X\subseteq\bb R^n$. If $0\in\rint\conv X$, then the set
$\{\,\sgn a\mid a\in X\,\}$ is consistent.
\end{theorem}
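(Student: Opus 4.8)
The plan is to verify the consistency condition \refeq{SC} separately in each coordinate $j\in[n]$, thereby reducing a statement about relative interiors in $\bb R^n$ to a one-dimensional argument about signs. Fix $j$. Since condition \refeq{SC} is symmetric under interchanging $+1$ and $-1$, it suffices to prove one implication: if some $a\in X$ has $a_j>0$, then some $b\in X$ has $b_j<0$.

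The crucial observation is that $0$ lies in the \emph{relative} interior of $\conv X$, not merely in $\conv X$. I would use the standard fact that a point $x\in\rint C$ of a convex set $C$ can be ``pushed past'' any point $y\in C$: there is $\epsilon>0$ with $x+\epsilon(x-y)\in C$ (the point $x+\epsilon(x-y)$ is an affine combination of $x,y$, hence lies in $\aff C$, and for small $\epsilon$ it lies in a relative neighbourhood of $x$ contained in $C$). Applying this with $C=\conv X$, $x=0$, $y=a$ produces $\epsilon>0$ with $-\epsilon a\in\conv X$. Writing $-\epsilon a=\sum_i\mu_i b_i$ as a convex combination with $b_i\in X$, $\mu_i\ge0$, $\sum_i\mu_i=1$ and reading off coordinate $j$ gives $\sum_i\mu_i(b_i)_j=-\epsilon a_j<0$, so $(b_i)_j<0$ for at least one $i$; take $b=b_i$. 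The symmetric argument yields the reverse implication, so $\{\,\sgn a\mid a\in X\,\}$ is consistent in coordinate $j$, and as $j$ was arbitrary it is consistent.

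The one place that needs care is the ``push past'' step when $X$ is infinite, since then $\conv X$ need not be closed --- but that property of relative interiors uses no closedness, only $0\in\rint\conv X$, so the difficulty is only apparent. (Equivalently, one can argue by contradiction: if no $b\in X$ has $b_j<0$ then $\conv X$ lies in the half-space $\{\,y\mid y_j\ge0\,\}$, touches the bounding hyperplane at $0$ but is not contained in it, hence $0$ lies on a proper face of $\conv X$ and so cannot be in $\rint\conv X$; this rests on the same fact.) I therefore expect the relative-interior step to be the only genuine content; the rest is bookkeeping.
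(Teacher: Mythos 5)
Your proof is correct. It differs from the paper's argument mainly in the key fact it invokes: the paper projects everything onto the $j$-th coordinate axis, noting that projections commute with the convex hull operator, so that $0\in\rint\conv X$ forces the projected set $\conv\{\,a_j\mid a\in X\,\}$ to be either $\{0\}$ or an interval with $0$ in its interior, which is exactly the coordinate-wise condition \refeq{SC'}; you instead stay in $\bb R^n$ and use the ``push past'' characterization of the relative interior ($0\in\rint C$, $a\in C$ $\Rightarrow$ $-\epsilon a\in C$ for some $\epsilon>0$), then read off coordinate $j$ from a finite convex combination representing $-\epsilon a$. The two routes are close relatives: if one proves carefully that the projection of $\rint\conv X$ sits inside the relative interior of the projection, one ends up using the same push-past property you state, so your version makes explicit (and self-contained, with the one-line neighbourhood argument and the remark that no closedness is needed) what the paper's projection argument quietly relies on; the paper's version buys brevity and a clean one-dimensional picture, yours buys a more elementary, fully spelled-out chain ending in the concrete inequality $\sum_i\mu_i(b_i)_j=-\epsilon a_j<0$. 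Both correctly reduce the theorem to the per-coordinate equivalence and both handle the degenerate case (all $a_j=0$) automatically.
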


\begin{proof}
Let $0\in\rint\conv X$. Then the projection of the set $\rint\conv X$
onto each coordinate axis is either the set $\{0\}$ or an interval
containing zero as its interior point. Noting that projections commute
with the convex hull operator, this means for every $j\in[n]$ we have that
\begin{equation}
\exists a\in X{:}\ a_j<0 \quad\Longleftrightarrow\quad \exists b\in X{:}\ b_j>0 .
\label{eq:SC'}
\end{equation}
This is equivalent to condition~\refeq{SC} for the set $\{\,\sgn a\mid a\in X\,\}$.
\end{proof}

\begin{theorem}
\label{th:coord-consist>consist}
Let $X\subseteq\bb R^n$. If $0\in\conv X$, then the set
$\{\,\sgn a\mid a\in X\,\}$ has a non-empty consistent subset.
\end{theorem}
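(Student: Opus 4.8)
The plan is to reduce the statement to the two preceding theorems. Assume $0\in\conv X$. Applying Theorem~\ref{th:rint} with $a=0$, there is a face~$F$ of $\conv X$ (namely the intersection of all faces of $\conv X$ that contain the origin) such that, setting $Y=X\cap F$, we have $0\in\rint\conv Y$. In particular $\conv Y\neq\emptyset$, so $Y$ itself is non-empty, and hence so is $S=\{\,\sgn a\mid a\in Y\,\}$. Since $Y\subseteq X$, clearly $S\subseteq\{\,\sgn a\mid a\in X\,\}$.

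It then remains to show that $S$ is consistent. But this is exactly Theorem~\ref{th:rint-scons} applied to the set~$Y$, using $0\in\rint\conv Y$. Thus $S$ is a non-empty consistent subset of $\{\,\sgn a\mid a\in X\,\}$, as required.

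Since almost all the work has been delegated to Theorems~\ref{th:rint} and~\ref{th:rint-scons}, there is no real obstacle here; the only point that needs a moment's thought is the non-emptiness of~$Y$, which follows because the relative interior of the empty set is empty and so $0\in\rint\conv Y$ forces $\conv Y$, and therefore $Y$, to be non-empty. An alternative, more self-contained route avoiding Theorem~\ref{th:rint} would be to invoke Carath\'eodory's theorem to pass to a finite subset $\{a_1\...a_k\}\subseteq X$ with $0\in\conv\{a_1\...a_k\}$ and then take an inclusion-minimal such subset, for which automatically $0\in\rint\conv\{a_1\...a_k\}$ before applying Theorem~\ref{th:rint-scons}; but using Theorem~\ref{th:rint} is shorter.
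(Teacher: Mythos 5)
Your proof is correct and follows exactly the paper's own argument: apply Theorem~\ref{th:rint} with $a=0$ to obtain $Y\subseteq X$ with $0\in\rint\conv Y$, then Theorem~\ref{th:rint-scons} to get consistency of $\{\,\sgn a\mid a\in Y\,\}$, noting non-emptiness and inclusion in $\{\,\sgn a\mid a\in X\,\}$. The Carath\'eodory-based alternative you sketch is a valid variant but not needed; the main argument matches the paper's proof.
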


\begin{proof}
If $0\in\conv X$, then by Theorem~\ref{th:rint} there is
$Y\subseteq X$ such that $0\in\rint\conv Y$. Thus, by
Theorem~\ref{th:rint-scons}, the set $\{\,\sgn a\mid a\in Y\,\}$ is
consistent. This set is non-empty and it is a subset of
$\{\,\sgn a\mid a\in X\,\}$.
\end{proof}

Theorem~\ref{th:coord-consist>consist} gives a necessary condition for
a set $X\subseteq\bb R^n$ to satisfy $0\in\conv X$. This is what we
called the sign relaxation of $0\in\conv X$. This condition is of
course not sufficient, e.g., for $X=\{(1,-2),(-2,1)\}$ the set
$\{\,\sgn a\mid a\in X\,\}=\{(1,-1),(-1,1)\}$ is consistent but
$0\notin\conv X$. 

Next we develop the concept of {\em consistency closure\/}, analogous
to arc consistency closure~\cite{Bessiere06}.



\begin{theorem}
\label{th:SC-union}
Let the sets $S,T\subseteq\{-1,0,1\}^n$ be consistent. Then the
set $S\cup T$ is consistent.
\end{theorem}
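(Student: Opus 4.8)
The plan is to verify the defining condition of consistency from Definition~\ref{def:SC} for the set $S\cup T$ one coordinate at a time, in each case reducing the question to the assumed consistency of $S$ and of $T$ separately. So I would fix an arbitrary coordinate $j\in[n]$ and aim to show that $S\cup T$ is consistent in coordinate~$j$, i.e., that there exists $u\in S\cup T$ with $u_j=-1$ if and only if there exists $v\in S\cup T$ with $v_j=1$.

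For the forward implication, suppose some $u\in S\cup T$ satisfies $u_j=-1$. Then $u$ belongs to at least one of the two sets; without loss of generality say $u\in S$ (the case $u\in T$ being handled identically, with $T$ in place of $S$). Since $S$ is consistent in coordinate~$j$, condition~\refeq{SC} yields an element $s\in S$ with $s_j=1$, and then $s\in S\cup T$ is the required witness. The converse implication follows by the same argument with the roles of $-1$ and $1$ interchanged. As $j$ was arbitrary, $S\cup T$ is consistent in every coordinate, hence consistent.

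I would also remark that, since $\{-1,0,1\}^n$ is finite, a trivial induction extends the statement from two consistent sets to any family of consistent subsets of a fixed ground set; this is precisely what is needed to define the consistency closure of a set as the union of all its consistent subsets, and to see that this closure is itself consistent.

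There is no real obstacle here: the claim is essentially an unfolding of the definitions. The only points requiring a little care are to keep both directions of the biconditional~\refeq{SC} in mind and, in the ``without loss of generality'' step, to invoke the consistency of whichever of $S$ and $T$ actually contains the element that witnesses the value $-1$ (resp.\ $1$) in coordinate~$j$.
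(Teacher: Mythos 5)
Your proof is correct and follows essentially the same argument as the paper: take a witness of value $-1$ (or $1$) in coordinate $j$, note it lies in $S$ or in $T$, and invoke the consistency of that set to produce the opposite-sign witness, which again lies in $S\cup T$. No further comment is needed.
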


\begin{proof}
Let $s\in S\cup T$, so $s\in S$ or $s\in T$. Suppose $s\in S$.
Suppose for some $j\in[n]$ we have, say, $s_j<0$. As $S$~is
consistent, there is $t\in S$ such that $s_j>0$. But
$t\in S\cup T$.
\end{proof}

\begin{definition}
\label{def:SCclosure}
The {\em consistency closure\/} of a set $S\subseteq\{-1,0,1\}^n$
is the greatest (with respect to partial ordering by inclusion)
consistent subset of~$S$, i.e., it is the union of all
consistent subsets of~$S$. We will denote it by $\cons S$.
\end{definition}

\begin{theorem}
The $\cons$ operator satisfies the axioms of a closure operator, i.e., it is
\begin{itemize}
\item intensive ($\cons S \subseteq S$),
\item idempotent ($\cons \cons S = \cons S$),
\item non-increasing ($S\subseteq T\;\Longrightarrow\;\cons S \subseteq \cons T$).
\end{itemize}
\end{theorem}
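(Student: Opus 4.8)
The plan is to first confirm that $\cons S$ is well defined, and then read off the three axioms directly from the characterization of $\cons S$ as "the greatest consistent subset of~$S$''. The key preliminary step is to upgrade Theorem~\ref{th:SC-union} from binary unions to arbitrary ones: the union of \emph{any} family of consistent subsets of $\{-1,0,1\}^n$ is again consistent. The argument is the same as in Theorem~\ref{th:SC-union}: if $s$ lies in such a union, it lies in one member $S_\alpha$ of the family, which is consistent, so for any coordinate $j$ with $s_j=-1$ there is $t\in S_\alpha$ with $t_j=1$, and this $t$ also lies in the union; the implication in the other direction is identical. (Since $\{-1,0,1\}^n$ is finite, this also follows by induction from Theorem~\ref{th:SC-union}.) Applying this to the family of all consistent subsets of a given~$S$, the union $\cons S$ is a consistent subset of~$S$ that contains every consistent subset of~$S$; hence it is indeed the greatest such, and Definition~\ref{def:SCclosure} is meaningful.

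With this in hand the three axioms are formal. Intensivity is immediate: $\cons S$ is a union of subsets of~$S$, hence $\cons S\subseteq S$. For the non-increasing property, suppose $S\subseteq T$; then $\cons S$ is consistent and $\cons S\subseteq S\subseteq T$, so $\cons S$ is a consistent subset of~$T$, and since $\cons T$ is the greatest such, $\cons S\subseteq\cons T$. For idempotence, one inclusion, $\cons\cons S\subseteq\cons S$, is a special case of intensivity; conversely $\cons S$ is a consistent subset of itself, hence is contained in the greatest consistent subset of $\cons S$, which is $\cons\cons S$, giving $\cons S\subseteq\cons\cons S$ and thus equality.

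I do not expect a real obstacle here: the only point requiring any care is the well-definedness, i.e.\ that (finite, since the ambient set is finite) unions of consistent sets are consistent, which is essentially a restatement of the proof of Theorem~\ref{th:SC-union}; once $\cons S$ is known to be the largest consistent subset of~$S$, all three closure-operator axioms are one-line consequences.
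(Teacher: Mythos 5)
Your proof is correct and follows essentially the same route as the paper: both rest on $\cons S$ being the greatest consistent subset of~$S$ (justified via the union theorem, Theorem~\ref{th:SC-union}), treat intensivity and idempotence as direct consequences of that characterization, and prove the non-increasing property by the identical argument ($\cons S$ is a consistent subset of~$T$, hence contained in $\cons T$). Your added care about well-definedness (extending the union theorem to arbitrary families, or using finiteness of $\{-1,0,1\}^n$) and the explicit two-inclusion argument for idempotence merely spell out what the paper leaves as ``immediate from Definition~\ref{def:SCclosure}.''
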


\begin{proof}
Intensivity and idempotency are immediate from Definition~\ref{def:SCclosure}.
To show the non-increasing property, let $S\subseteq T$. By
intensivity, we have $\cons S\subseteq T$. But $\cons S$ is
consistent, thus by Definition~\ref{def:SCclosure} it must be a subset
of $\cons T$.
\end{proof}

%

\begin{definition}
\label{def:SCalg}
{\em Enforcing consistency\/} of a 
set $S\subseteq\{-1,0,1\}^n$ is the algorithm that repeats the following
iteration:
\begin{enumerate}
\item choose an arbitrary coordinate $j\in[n]$ in which $S$~is not
consistent,
\item remove from~$S$ all elements~$s$ such that $s_j\neq0$.
\end{enumerate}
If $S$ is consistent, the algorithm stops.
\end{definition}

\begin{theorem}
Enforcing consistency of any set $S\subseteq\{-1,0,1\}^n$
yields $\cons S$.
\end{theorem}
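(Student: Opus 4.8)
The plan is to establish two facts and then combine them: \emph{(i)} the algorithm always terminates, and \emph{(ii)} each iteration leaves $\cons S$ unchanged. Given these, the conclusion is immediate.

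For termination, I would note that whenever $S$ is not consistent in a coordinate $j$, exactly one of the two statements in \refeq{SC} holds, so $S$ contains at least one element $s$ with $s_j\neq0$. Hence step~2 of the iteration in Definition~\ref{def:SCalg} deletes at least one element of~$S$. Since $|S|\le 3^n$ is finite at the start and strictly decreases each round, the algorithm performs finitely many iterations and stops with a consistent set.

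For the invariant, let $S'$ be obtained from $S$ by one iteration at coordinate~$j$; I claim $\cons S'=\cons S$. The inclusion $\cons S'\subseteq\cons S$ is immediate from $S'\subseteq S$ and the non-increasing property of $\cons$. For the reverse inclusion, the key step --- and the part I expect to require the most care --- is to show that the removal in step~2 deletes no element of $\cons S$, i.e.\ $\cons S\subseteq S'$. Suppose without loss of generality that $S$ fails \refeq{SC} in coordinate~$j$ because there is $s\in S$ with $s_j=-1$ but no $t\in S$ with $t_j=1$. Take any $u\in\cons S$; since $\cons S\subseteq S$, no element of $\cons S$ has coordinate~$j$ equal to~$1$, so $u_j\neq0$ would force $u_j=-1$. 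But $\cons S$ is consistent, hence consistent in coordinate~$j$, so the presence of such a~$u$ would require some element of $\cons S$ (and therefore of~$S$) to have coordinate~$j$ equal to~$1$, a contradiction. Thus $u_j=0$ for every $u\in\cons S$, so nothing is removed from $\cons S$, giving $\cons S\subseteq S'$. Since $\cons S$ is a consistent subset of~$S'$, Definition~\ref{def:SCclosure} yields $\cons S\subseteq\cons S'$, completing the equality.

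Finally, let $S^\ast$ be the set at which the algorithm stops. By construction $S^\ast$ is consistent, so it is its own greatest consistent subset, i.e.\ $\cons S^\ast=S^\ast$. Applying the invariant along the finitely many iterations gives $\cons S^\ast=\cons S$ for the original input, and therefore $S^\ast=\cons S$, as required.
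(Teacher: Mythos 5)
Your proof is correct and follows essentially the same route as the paper: the key step in both is that when consistency fails in coordinate~$j$, every element of a consistent subset of~$S$ must have $j$-th coordinate zero, so the deletion step never removes anything from a consistent subset --- you package this as the per-iteration invariant $\cons S'=\cons S$, while the paper runs the very same observation as an induction showing that an arbitrary consistent $T\subseteq S$ survives every iteration. Your explicit termination argument is a welcome detail that the paper leaves implicit.
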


\begin{proof}
The algorithm creates a sequence of sets
$S=S_0\supset S_1\supset\cdots\supset S_K$, where $S_K$~is consistent
and $K$~is the number of iterations. This sequence is given
recurrently by
$S_k=S_{k-1}\setminus\{\,s\in S_{k-1}\mid s_{j_k}\neq0\,\}$ where each
$j_k\in[n]$ is such that $S_{k-1}$ is not consistent in
coordinate~$j_k$.

Let a set $T\subseteq S$ be consistent. We show by induction that
$T\subseteq S_k$ for all~$k$. Assume $T\subseteq S_{k-1}$. Since
$S_{k-1}$~is not consistent in coordinate~$j_k$, 
it follows from Definition~\ref{def:SC} that $s_{j_k}=0$ for every
$s\in T$. Therefore
$T\subseteq S_{k-1}\setminus\{\,s\in S_{k-1}\mid s_{j_k}\neq0\,\} =
S_k$.

We have shown that every consistent subset of~$S$ is contained
in~$S_K$, which is itself consistent. This means that
$S_K=\cons S$.
\end{proof}

To conclude, the sign relaxation of the condition $0\in\conv X$ is
that $\cons\{\,\sgn a\mid a\in X\,\}\neq\emptyset$.

\section{Coordinate Minimization}


{\em Coordinate minimization\/} of a function $f{:}\ \bb R^n\to\bb R$
is a method that, starting from an initial point $x=(x_1\...x_n)$, repeats the
following iteration:
\begin{enumerate}
\item choose $j\in[n]$,
\item choose $\displaystyle x^*_j\in\argmin_{x_j\in\bb R}f(x)$,
\item set $x_j\gets x^*_j$.
\end{enumerate}
Since the choices in the first two steps are not specified, this does
not define a single algorithm but rather a class of algorithms (a
`method'). 

Further on, we focus on applying coordinate minimization to functions
of the form~\refeq{fun}.

For functions of the form~\refeq{fun}, in Step~2 of coordinate
minimization the univariate minimizer $x^*_j$ is in general not
unique. Therefore, some rule must be adopted to choose a unique
minimizer. Let us restrict ourselves to deterministic rules. We show
that some rules can behave poorly. Consider the following rule: choose
the element of the set $\argmin_{x_j\in\bb R}f(x)$ that is nearest
to~$x_j^{\rm prev}$, the $j$-th variable from the previous
iteration. We refer to this as the {\em proximal rule\/}\footnote{This
  can be indeed seen as a proximal regularization of coordinate
  minimization: instead of function~$f$ we minimize the function
  $g(x,y)=f(x)+\mu\|x-y\|^2$ for a small $\mu>0$. For this function,
  the univariate minimization would read
  $\argmin_{x_j\in\bb R}(f(x_1\...x_n)+\mu(x_j-x_j^{\rm prev})^2)$,
  which has the unique minimizer given by the proximal rule. Cf.\
  \cite[Exercise~2.7.2]{Bertsekas99}.}.

\begin{example}
\label{ex:prox}
Consider the function
\begin{equation}
f(x_1,x_2,x_3) = \max\{\,x_2-x_3,\,x_3-x_1,\,x_1-x_2\,\} ,
\label{eq:fun-prox}
\end{equation}
which has minimum value~0, attained for any $x_1=x_2=x_3$. The point
$x=(x_1,x_2,x_3)=(2,1,0)$ is fixed for coordinate minimization with
the proximal rule, with value $f(x)=\max\{1,-2,1\}=1$. The
subdifferential at this point is $\partial f(x) = \conv X$ where
$X=\{(0,1,-1),(1,-1,0)\}$. Using the algorithm from
Definition~\ref{def:SCalg} we find that
$\cons\{\,\sgn a \mid a\in
X\,\}=\cons\{(0,1,-1),(1,-1,0)\}=\emptyset$.
Thus, point~$x$ does not satisfy the sign relaxation of the condition
$0\in\partial f(x)$.
\end{example}

Imitating max-sum diffusion, we propose a better rule, which we call
the {\em unique rule\/}: in Step~2 of coordinate minimization, rather
than minimizing the maximum of all affine functions, minimize the
maximum of only those affine functions that depend on
variable~$x_j$. That is, minimize the function
\begin{equation}
\phi_j(x) = \max_{i\mid a_{ij}\neq0} (a_i^Tx+b_i)
\label{eq:phi}
\end{equation}
where $\max_{i\mid a_{ij}\neq0}$ denotes maximization over all
$i\in[m]$ such that $a_{ij}\neq0$.
Further on, we will assume that the set $\{\,\sgn a_i\mid i\in[m]\,\}$ is
consistent.
Under this assumption, for every $j\in[n]$ and every
$x_1\...x_{j-1},x_{j+1}\...x_n\in\bb R$, the univariate function
$x_j\mapsto\phi_j(x)$ has exactly one minimizer, $x^*_j$. This
minimizer is the unique solution of the equation
\begin{equation}
\max_{i\mid a_{ij}<0} (a_i^Tx+b_i) = \max_{i\mid a_{ij}>0} (a_i^Tx+b_i) .
\label{eq:fixedpt-x}
\end{equation}
To summarize, the iteration of the algorithm for coordinate $j\in[n]$
adjusts variable~$x_j$ to satisfy~\refeq{fixedpt-x}, keeping the
other variables unchanged.  A {\em fixed point\/} of the
algorithm\footnote{Note, such a point is a Nash equilibrium for
  penalty functions $\phi_1\...\phi_n{:}\ \bb R^n\to\bb R$.} is a
point $x\in\bb R^n$ that satisfies~\refeq{fixedpt-x} for all
$j\in[n]$.

For functions in the form~\refeq{fun}, coordinate minimization can be
equivalently formulated in terms of the values $y=Ax+b$ of the affine
functions, i.e., instead of updating the numbers $x_1\...x_n$ we
update the numbers $y_1\...y_m$, while $x_1\...x_n$ are no longer
explicitly kept. We first set $y=Ax+b$, where $x$~is the initial
point, and then repeat the following iteration:
\begin{enumerate}
\item choose $j\in[n]$,
\item choose $\displaystyle x^*_j\in\argmin_{x_j\in\bb R}\max_{i\in[m]}(a_{ij}x_j+y_i)$,
\item set $y_i\gets a_{ij}x^*_j+y_i$ for all $i\in[m]$.
\end{enumerate}
Using the unique rule, in Step~2 we need to find the unique
minimizer~$x_j^*$ of the univariate function
$\max\limits_{i\mid a_{ij}\neq0}(a_{ij}x_j+y_i)$, which is the
solution of the equation
\begin{equation}
\max_{i\mid a_{ij}<0}(a_{ij}x_j+y_i) = \max_{i\mid a_{ij}>0}(a_{ij}x_j+y_i) .
\label{eq:fixedpt-y'}
\end{equation}
After the iteration, we thus have
\begin{equation}
\max_{i\mid a_{ij}<0}y_i = \max_{i\mid a_{ij}>0}y_i .
\label{eq:fixedpt-y}
\end{equation}
A fixed point of the algorithm is a point
$y\in\{\,A(x+x')+b \mid x'\in\bb R^n\,\}$ that
satisfies~\refeq{fixedpt-y} for all $j\in[n]$. We introduce the
following notations:
\begin{itemize}
\item Mapping $p_j{:}\ \bb R^m\to\bb R^m$ denotes the action of Steps~2 and~3 of
coordinate minimization with the unique rule, formulated in terms
of~$y$. That is, for $y\in\bb R^m$ and $j\in[n]$, $p_j(y)$ is
computed as follows: find the solution~$x_j^*$ of
equation~\refeq{fixedpt-y'} and then set $y_i\gets a_{ij}x^*_j+y_i$
for all $i\in[m]$.
\item Further on, we assume that coordinates~$j$ in Step~1 of
coordinate minimization are visited in the cyclic order. Let
$p=p_n\circ p_{n-1}\circ\cdots\circ p_2\circ p_1$ denote the action of
the algorithm for one cycle.
\item For $k\in\bb N$, let $p^k=p\circ\cdots\circ p$
($k$-times) denote the action of the algorithm for $k$~cycles.
\end{itemize}
In this notation, $y\in\bb R^m$ is a fixed point of the algorithm iff
$p_j(y)=y$ for every $j\in[n]$, which holds iff $p(y)=y$.

Next we give several examples of the algorithm's behavior.

\begin{example}
Recall that coordinate minimization is not guaranteed to find a global
minimum of a function of the form~\refeq{fun} because it is 
not differentiable. An example is the function
\[
f(x_1,x_2)=\max\{\,x_1-2x_2,\,x_2-2x_1\,\} ,
\]
which is unbounded but any point $x_1=x_2$ is fixed for coordinate
minimization. At any such point we have
$0\notin\partial f(x_1,x_2)=\conv X$ where $X=\{(1,-2),(-2,1)\}$. The
set $\{\,\sgn a\mid a\in X\,\}=\{(1,-1),(-1,1)\}$ is consistent.
There is no difference between the proximal rule and unique rule,
because univariate minimizers in Step~2 are unique. Coordinate
minimization converges in one iteration.
\end{example}


The unique rule is not worse than the proximal rule, in the sense that
every fixed point of coordinate minimization with the unique rule is
also a fixed point of coordinate minimization with the proximal
rule. The following example shows that the unique rule is in fact
strictly better.

\begin{example}
Let us return to Example~\ref{ex:prox} and apply coordinate
minimization with the unique rule to function~\refeq{fun-prox}. For
$j=1$, equation~\refeq{fixedpt-y'} reads $-x_1+y_2=x_1+y_3$ with the
solution $x_1^*=(y_2-y_3)/2$, thus $y$~is updated as
$y_2\gets -x_1^*+y_2=(y_2+y_3)/2$ and
$y_3\gets x_1^*+y_3=(y_2+y_3)/2$. Similarly we derive the iterations
for $j=2,3$. In summary, we have
\begin{align*}
p_1(y_1,y_2,y_3) &= (\,y_1,\,(y_2+y_3)/2,\,(y_2+y_3)/2\,) , \\
p_2(y_1,y_2,y_3) &= (\,(y_3+y_1)/2,\,y_2,\,(y_3+y_1)/2\,) , \\
p_3(y_1,y_2,y_3) &= (\,(y_1+y_2)/2,\,(y_1+y_2)/2,\,y_3\,) .
\end{align*}
We see that in every iteration, the algorithm takes a pair of the
numbers $y_1,y_2,y_3$ and replaces both of them with their
average. For any initial $y_1,y_2,y_3$, the sequence
$(p^k(y_1,y_2,y_3))_{k\in\bb N}$ converges to the point with
$y_1=y_2=y_3=(y_1+y_2+y_3)/3$, i.e., to a minimizer of
function~\refeq{fun-prox}.
\end{example}

The next example shows that there are functions for which coordinate
minimization with the unique rule has no fixed point and even no point
satisfying the sign-relaxed optimality condition.

\begin{example}
%
Let
\begin{equation}
f(x_1,x_2,x_3) = \max\{\, x_1-x_2-x_3, \, x_1+4, \, x_1+x_2+x_3, \, -x_1+x_2+2 \,\} .
\label{eq:fun-nofixed}
\end{equation}
This function is not bounded from below. System~\refeq{fixedpt-x},
defining the fixed point condition, reads
\begin{align*}
\max\{\, x_1-x_2-x_3, \, x_1+4, \, x_1+x_2+x_3 \,\} &= -x_1+x_2+2 \\
\max\{\, x_1+x_2+x_3, \, -x_1+x_2+2 \,\} &= x_1-x_2-x_3 \\
x_1+x_2+x_3 &= x_1-x_2-x_3
\end{align*}
The third equation implies $x_2+x_3=0$, thus the system simplifies to
\begin{align*}
\max\{\, x_1, \, x_1+4, \, x_1 \,\} &= -x_1+x_2+2 \\
\max\{\, x_1, \, -x_1+x_2+2 \,\} &= x_1
\end{align*}
The first equation simplifies to $x_1+4 = -x_1+x_2+2$, hence
$x_2=2x_1+2$. Plugging this to the second equation gives a
contradiction. This shows that the algorithm has no fixed point.

We shall show that there is even no point $x\in\bb R^n$ such that
$\cons\{\,\sgn a_i\mid i\in\argmax(Ax+b)\,\}\neq\emptyset$. For
contradiction, suppose it is so. That is, there is $x\in\bb R^n$ and
$I\subseteq[m]$ such that the set $\{\,\sgn a_i\mid i\in I\,\}$ is
consistent and $\emptyset\neq I\subseteq\argmax(Ax+b)$.  It can be
checked that the only non-empty subset of $[m]$ for which the set
$\{\,\sgn a_i\mid i\in I\,\}$ is consistent is $I=[m]$. But there is
no~$x$ such that $\argmax(Ax+b)=[m]$, i.e., at which all four affine
functions are active.


How will the algorithm behave in this case? For the initial point
$(x_1,x_2,x_3)=(0,0,0)$, the first three iterations of the algorithm
are
\[
\begin{array}{|c|ccc|cccc|}
\hline
j & x_1 & x_2 & x_3 & y_1 & y_2 & y_3 & y_4 \\
\hline
  & 0 & 0 & 0 &   0 & 4 & 0 & 2 \\
1 & -1 & 0 & 0 &  -1 & 3 & -1 & 3 \\
2 & -1 & -2 & 0 &  1 & 3 & -3 & 1 \\
3 & -1 & -2 & 2 & -1 & 3 & -1 & 1 \\
\hline
\end{array}
\]
The resulting values of~$y$ are the initial
values minus one. Every later cycle will again decrease them by one, therefore
algorithm therefore will decrease~$y$ unboundedly.
\end{example}

\begin{example}
Consider the function
\begin{equation}
f(x_1,x_2,x_3) = \max\{\, x_1-x_2-x_3, \, x_1+4, \, x_1+x_2+x_3, \, -x_1+x_2+2, \, 0 \,\} ,
\label{eq:fun-nofixed-0}
\end{equation}
which differs from~\refeq{fun-nofixed} only by the extra zero
function.  As shown in the previous example, the algorithm has no
fixed point and the first four components of the vector
$y=(y_1,y_2,y_3,y_4,y_5)$ will diverge. Since after a few iterations
$y_1,y_2,y_3,y_4$ become all negative but $y_5$~remains zero, the set
$\cons\{\,\sgn a_i\mid i\in\argmax y\,\}$ becomes non-empty.
\end{example}

The following theorem shows that every fixed point of coordinate
minimization with the unique rule satisfies the sign relaxation of the
condition $0\in\partial f(x)$.

\begin{theorem}
If \refeq{fixedpt-y}~holds for every $j\in[n]$, then the set
$\{\,\sgn a_i\mid i\in \argmax y\,\}$ is consistent.
\end{theorem}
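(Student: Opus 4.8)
The plan is to verify Definition~\ref{def:SC} directly for the set $S=\{\,\sgn a_i\mid i\in\argmax y\,\}$: fix a coordinate $j\in[n]$ and show that $S$ contains an element with $j$-th component $-1$ if and only if it contains one with $j$-th component $+1$. By symmetry of $\pm$, it suffices to prove one implication, say that the existence of $i\in\argmax y$ with $a_{ij}<0$ forces the existence of $i'\in\argmax y$ with $a_{i'j}>0$. I would actually aim to show the cleaner equivalent statement: if the set $I^-=\{\,i\in[m]\mid a_{ij}<0\,\}$ meets $\argmax y$, then so does $I^+=\{\,i\in[m]\mid a_{ij}>0\,\}$.

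First I would observe that the hypothesis $\refeq{fixedpt-y}$ for this particular $j$ says $\max_{i\in I^-}y_i=\max_{i\in I^+}y_i$; call this common value $v$. (If either $I^-$ or $I^+$ were empty, the consistency assumption on $\{\,\sgn a_i\mid i\in[m]\,\}$ — which we are carrying throughout — would force the other to be empty as well, and then coordinate $j$ contributes nothing and consistency in coordinate $j$ holds trivially; so assume both nonempty.) Now suppose some $i_0\in I^-\cap\argmax y$; then $y_{i_0}=\max y$. On the other hand $y_{i_0}\le\max_{i\in I^-}y_i=v=\max_{i\in I^+}y_i\le\max y$, so all these inequalities are equalities, giving $v=\max y$. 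Hence there is $i'\in I^+$ with $y_{i'}=v=\max y$, i.e.\ $i'\in I^+\cap\argmax y$, which is exactly what we wanted. The same argument with the roles of $I^-$ and $I^+$ swapped gives the reverse implication, so $S$ is consistent in coordinate $j$; since $j$ was arbitrary, $S$ is consistent.

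The only mildly delicate point — and where I would be careful — is the bookkeeping around empty index sets and the role of the standing consistency assumption on $\{\,\sgn a_i\mid i\in[m]\,\}$: one must make sure that $\refeq{fixedpt-y}$ is even meaningful (both maxima over nonempty sets) before using it, and handle the degenerate coordinate separately. Everything else is a short squeeze argument turning the equality of the two "one-sided" maxima into the statement that whenever one side attains the global maximum, so does the other. I do not expect any real obstacle; the essential content is just that $\refeq{fixedpt-y}$ ties the two sign-classes of affine functions together at coordinate $j$, which is precisely the pattern $\refeq{SC}$ demands.
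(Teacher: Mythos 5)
Your proof is correct and takes essentially the same approach as the paper's: both verify condition~\refeq{SC} for the set $\{\,\sgn a_i\mid i\in\argmax y\,\}$ coordinate by coordinate from the equality of the two one-sided maxima in~\refeq{fixedpt-y}, the paper via a two-case analysis on whether their common value equals $\max y$, you via the equivalent squeeze argument run from one side and symmetrized. Your explicit treatment of empty index sets, using the standing consistency assumption on $\{\,\sgn a_i\mid i\in[m]\,\}$, is a detail the paper leaves implicit and does not change the argument.
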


\begin{proof}
For every $j\in[n]$, \refeq{fixedpt-y} implies that
\begin{equation}
\exists i\in \argmax y{:}\ a_{ij}<0 \quad\Longleftrightarrow\quad \exists i'\in \argmax y{:}\ a_{i'j}>0 .
\label{eq:argmax-SC}
\end{equation}
Indeed, if the common value of both sides in~\refeq{fixedpt-y} is
equal to [less than] $\max y$, both sides of~\refeq{argmax-SC} are
true [false]. Condition~\refeq{argmax-SC} is equivalent to
condition~\refeq{SC} for the set
$\{\,\sgn a_i\mid i\in \argmax y\,\}$.
\end{proof}

For an initial point
$y\in\bb R^m$, consider the sequence of vectors
$(p^k(y))_{k\in\bb N}$. Although we believe that, under some
reasonably weak assumptions, this sequence converges to a fixed point,
we are not able to prove this. Following~\cite{Schlesinger-2011}, we
formulate and prove a weaker result.

For $\epsilon\ge0$, let
\begin{equation}
\argmax\nolimits^\epsilon y 
= \{\,i\in[m]\mid y_i + \epsilon \ge \max y \,\}
\label{eq:argmax-eps}
\end{equation}
denote the set of $\epsilon$-maximal components of a vector
$y\in\bb R^m$. We now define function $e{:}\ \bb R^m\to\bb R_+$ by
\begin{equation}
e(y) = \inf\{\, \epsilon\ge0 \mid \cons\{\,\sgn a_i\mid i\in \argmax\nolimits^\epsilon y\,\}\neq\emptyset \,\} .
\label{eq:e}
\end{equation}
This function measures how much point~$y$ violates the
condition\footnote{In~\cite{Schlesinger-2011}, `consistency' means
  $\cons\{\,\sgn a_i\mid i\in \argmax\nolimits y\,\}\neq\emptyset$,
  rather than consistency in the sense of our
  Definition~\ref{def:SC}.}
$\cons\{\,\sgn a_i\mid i\in \argmax\nolimits y\,\}\neq\emptyset$,
which is the sign relaxation of the condition
$0\in\conv\{\,a_i\mid i\in\argmax y\,\}=\partial f(x)$.

\begin{theorem}
\label{th:e-converges}
Let $0\in\rint\conv\{a_1\...a_m\}$. Let $y\in\bb R^m$. Then $\lim\limits_{k\to\infty} e(p^k(y)) = 0$.
\end{theorem}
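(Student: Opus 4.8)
The plan is to follow the strategy of~\cite{Schlesinger-2011}: first show the orbit $(p^k(y))_{k\in\bb N}$ is bounded, then that the ``height'' $\max p^k(y)$ converges, and finally extract from the convergence of the height that the consistency violation $e$ tends to zero.

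\emph{Step 1 (boundedness).} Since $0\in\rint\conv\{a_1,\ldots,a_m\}$, by~\refeq{rint-conv-lambda} there is $\lambda\in\bb R^m$ with $\lambda>0$, $1^T\lambda=1$, $A^T\lambda=0$. Every elementary step $p_j$ replaces $y$ by $y+t\,a_{\cdot j}$ for some $t\in\bb R$, where $a_{\cdot j}$ is the $j$-th column of $A$; hence $\lambda^Ty$ is invariant along the whole orbit, $\lambda^Tp^k(y)=\lambda^Ty$ for all $k$. On the other hand, writing $f(x)=\max\{\phi_j(x),\psi_j(x)\}$ where $\psi_j$ collects the affine terms that do not depend on $x_j$, the step $p_j$ minimizes $\phi_j$ and leaves $\psi_j$ unchanged (recall the univariate minimizer is unique by the standing consistency assumption, cf.~\refeq{fixedpt-x}); so the quantity $\max y=\max(Ax+b)=f$ is non-increasing under every $p_j$, hence under $p$. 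Combining: each component of $p^k(y)$ is bounded above by $\max y$, and, since $\lambda>0$ and $\lambda^Tp^k(y)$ is constant, also bounded below. Thus $(p^k(y))_k$ stays in a compact set.

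\emph{Step 2 (the height converges and its decrements telescope).} By Step~1, $\max p^k(y)$ is non-increasing and bounded below, so it converges to some $f^\infty$. Refine to the sub-iterations: put $q_0=\mathrm{id}$, $q_j=p_j\circ q_{j-1}$, so $q_n=p$ and the intermediate states of cycle $k$ are $q_j(p^k(y))$. The height is non-increasing along each $p_j$ as well, and the sum over all $k$ and all $j$ of the decrements $\max q_{j-1}(p^k(y))-\max q_j(p^k(y))$ telescopes to $\max y-f^\infty<\infty$; in particular, for each fixed $j$, this decrement tends to $0$ as $k\to\infty$.

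\emph{Step 3 (conclusion).} Suppose $e(p^k(y))\not\to0$; pick $\delta>0$ and a subsequence $k_l\to\infty$ with $e(p^{k_l}(y))\ge\delta$, and by Step~1 assume $p^{k_l}(y)\to y^*$. The function $e$ is upper semicontinuous: if $\epsilon>e(y^*)$, then (the infimum in~\refeq{e} being attained) $\cons\{\,\sgn a_i\mid i\in\argmax\nolimits^{e(y^*)}y^*\,\}\neq\emptyset$, and for $z$ near $y^*$ we have $\argmax^{e(y^*)}y^*\subseteq\argmax^\epsilon z$, whence by monotonicity of $\cons$ also $e(z)\le\epsilon$; so $e(y^*)\ge\limsup_l e(p^{k_l}(y))\ge\delta>0$. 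On the other hand, each $p_j$ is continuous, so $q_j(p^{k_l}(y))\to q_j(y^*)$; combining with Step~2, $\max q_{j-1}(y^*)=\max q_j(y^*)$ for every $j$, and the same holds with $y^*$ replaced by $p^r(y^*)$ for every $r$ (since $p^{k_l+r}(y)\to p^r(y^*)$ too). Hence the height is constant along the entire orbit of $y^*$. It remains to deduce that $y^*$ is a fixed point: since the unique rule makes every univariate minimizer unique, a step $p_j$ not strictly decreasing $\phi_j$ is the identity, and a step that does move $x_j$ pushes the affine terms depending on $x_j$ that are active at the height strictly below it, making $\argmax y$ shrink without recovery over a cycle — impossible with the height pinned. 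Then the preceding theorem gives $\cons\{\,\sgn a_i\mid i\in\argmax y^*\,\}\neq\emptyset$, i.e.\ $e(y^*)=0$, a contradiction.

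\emph{Main obstacle.} The hard part is the passage in Step~3 from ``the height is constant along the orbit of $y^*$'' to ``$y^*$ is a fixed point''. In max-sum diffusion all entries of $A$ lie in $\{-1,0,1\}$, so a coordinate move, the change of $\phi_j$, and the change of $\max y$ are all comparable; here the magnitudes in $A$ may differ, so $p_j$ can move $x_j$ a lot while changing $\phi_j$ — and $\max y$ — only slightly, and this is precisely what the $\rint$ hypothesis (via the boundedness of the orbit) is there to control, rather than the weaker $0\in\conv\{a_1,\ldots,a_m\}$. An alternative route that isolates the same difficulty is a quantitative per-cycle estimate of the form $e(y)\le C\,(\max y-\max p(y))$ (or over a fixed number of cycles), with $C$ depending on $A$: then Step~2 would immediately yield $e(p^k(y))\to0$. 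Establishing such an estimate — showing that a large consistency violation forces a definite drop of the height within one pass through the coordinates — is the technical heart of the argument.
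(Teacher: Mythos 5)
Your analytic scaffolding is sound and matches the paper's: boundedness of the orbit (your conserved quantity $\lambda^Tp^k(y)$ with $\lambda>0$, $A^T\lambda=0$ is a nice, more elementary substitute for the LP-duality argument of Proposition~\ref{th:bounded-y}), monotone convergence of the height, continuity of the maps $p_j$, and an accumulation-point/semicontinuity argument for $e$ (the paper even proves $e$ is Lipschitz, Proposition~\ref{th:e-cont}). But the step you yourself flag as the obstacle is where the proof genuinely breaks, and the route you sketch for it is wrong. You try to conclude that an accumulation point $y^*$ with height constant along its orbit must be a \emph{fixed point}, and then invoke the fixed-point theorem to get $\cons\{\,\sgn a_i\mid i\in\argmax y^*\,\}\neq\emptyset$. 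Constancy of the height does not imply fixedness: a step $p_j$ on a coordinate $j$ on which no height-active term depends can move $x_j$ (and the sub-top values $y_i$) arbitrarily while leaving $\max y$ and $\argmax y$ untouched, so "the height is pinned" puts no constraint on such steps; your claim that a moving step "pushes the affine terms depending on $x_j$ that are active at the height strictly below it" fails exactly in this case. The function~\refeq{fun-nofixed-0} illustrates the phenomenon: once $y_1,\dots,y_4<0=y_5$ the height is constant forever, yet the iterates never become fixed. (Moreover, fixedness of accumulation points is more than the theorem needs and more than the paper can prove -- it explicitly states that convergence to a fixed point is open.)

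What the paper proves instead, and what your argument is missing, is a purely combinatorial statement about the \emph{top level set} $I(y,M)=\{i\mid y_i\ge M\}$ with $M=\max y$ frozen: a step $p_j$ leaves $I(\cdot,M)$ unchanged if $\{\,\sgn a_i\mid i\in I(y,M)\,\}$ is consistent in coordinate~$j$, and otherwise removes from it exactly the indices with $a_{ij}\neq0$ (Proposition~\ref{th:diffusion-scons-j}); hence one pass through the coordinates simulates the consistency-enforcing algorithm on the top set, and after at most $m$ cycles the top set equals the consistency closure of the original one (Proposition~\ref{th:diffusion-scons}). Consequently, if $\cons\{\,\sgn a_i\mid i\in\argmax y\,\}=\emptyset$ then $\max p^m(y)<\max y$ (Proposition~\ref{th:diffusion-abc}(c)); contrapositively, a pinned height over $m$ cycles already forces $e(y^*)=0$, with no fixed-point claim needed. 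This replaces your attempted deduction and also supersedes the quantitative estimate $e(y)\le C(\max y-\max p(y))$ you propose as an alternative -- no such uniform bound is needed (nor established in the paper), because compactness and continuity of $e$ absorb all quantitative issues. With Proposition~\ref{th:diffusion-abc}(c) in place, your Steps 1--3 assemble into essentially the paper's proof (Theorems~\ref{th:accum} and~\ref{th:converge-sc}); without it, the proposal has a genuine gap at its technical heart.
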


\begin{proof}
In the appendix.
\end{proof}

\subsection{Sum of Maxima of Affine Functions}

We have applied coordinate minimization to pointwise maximum of affine
functions~\refeq{fun}. But what if we want to apply it to a convex
piecewise-affine function in the form of a sum of pointwise maxima of
affine function, i.e.,
\begin{equation}
f(x) = \sum_i \max_j (a_{ij}^Tx+b_{ij}) ,
\label{eq:sum-max-fun}
\end{equation}
where $a_{ij}\in\bb R^n$ and $b_{ij}\in\bb R$. It turns out that
minimizing function~\refeq{sum-max-fun} can be easily transformed to
minimizing a function~\refeq{fun}. One way to do that is as follows.

\begin{theorem}
\label{th:sum-max-aux}
For every $\alpha_1\...\alpha_m\in\bb R$,
\begin{equation}
{1\over m}\sum_{i\in[m]} \alpha_i =
\min_{u_1+\cdots+u_m=0} \; \max_{i\in[m]}(\alpha_i+u_i) .
\label{eq:sum-max-aux}
\end{equation}
\end{theorem}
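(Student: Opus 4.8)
The plan is to prove \refeq{sum-max-aux} by establishing the two opposite inequalities; both are elementary, and the ``$\min$'' (rather than ``$\inf$'') on the right-hand side will come for free once the lower bound is matched by an explicit feasible point.

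First I would show that $\frac1m\sum_{i\in[m]}\alpha_i$ is a lower bound on the right-hand side. For any $u=(u_1\...u_m)\in\bb R^m$ with $u_1+\cdots+u_m=0$, the maximum of $m$~numbers is at least their arithmetic mean, so
\[
\max_{i\in[m]}(\alpha_i+u_i)\ \ge\ \frac1m\sum_{i\in[m]}(\alpha_i+u_i)
\ =\ \frac1m\sum_{i\in[m]}\alpha_i+\frac1m\sum_{i\in[m]}u_i
\ =\ \frac1m\sum_{i\in[m]}\alpha_i ,
\]
where the last step uses $\sum_i u_i=0$. Taking the infimum over all feasible~$u$ preserves this bound.

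Next I would exhibit a feasible $u$ attaining this value: set $u_i=\frac1m\sum_{k\in[m]}\alpha_k-\alpha_i$ for each $i\in[m]$. Then $\sum_{i\in[m]}u_i=\sum_{k\in[m]}\alpha_k-\sum_{i\in[m]}\alpha_i=0$, so $u$~is feasible, and $\alpha_i+u_i=\frac1m\sum_{k\in[m]}\alpha_k$ for every~$i$, whence $\max_{i\in[m]}(\alpha_i+u_i)=\frac1m\sum_{k\in[m]}\alpha_k$. This simultaneously shows that the infimum is attained (justifying the notation $\min$) and that its value is $\le\frac1m\sum_{i\in[m]}\alpha_i$. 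Combining with the previous paragraph yields~\refeq{sum-max-aux}.

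There is essentially no obstacle here: the only point worth a sentence is that the constructed~$u$ does double duty, both supplying the matching upper bound and witnessing attainment of the minimum. (Alternatively the identity can be read off from linear-programming duality applied to $\min\{\,z\mid \alpha_i+u_i\le z,\ i\in[m],\ \sum_{i\in[m]} u_i=0\,\}$, but the direct argument above is shorter and self-contained.)
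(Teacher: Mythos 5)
Your proof is correct, and it is slightly more careful than the paper's own argument, which proceeds in the opposite direction: the paper simply asserts that ``at optimum all expressions under the maximum will have the same value,'' denotes this common value by $b=\alpha_i+u_i$, and sums to get $\sum_i\alpha_i=mb-\sum_iu_i=mb$. That argument tacitly assumes both that a minimizer exists and that at a minimizer the terms equalize, neither of which is justified there. You instead prove the two inequalities directly: the bound $\max_i(\alpha_i+u_i)\ge\frac1m\sum_i(\alpha_i+u_i)=\frac1m\sum_i\alpha_i$ for every feasible~$u$, and the explicit choice $u_i=\frac1m\sum_k\alpha_k-\alpha_i$, which is feasible and equalizes all terms at the mean. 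Note that your witness is exactly the equalizing point the paper implicitly works with, so the two proofs identify the same minimizer; what your route buys is a self-contained verification of optimality and of attainment of the minimum, rather than relying on the unproved equalization claim.
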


\begin{proof}
Clearly, at optimum all expressions under the maximum will have the
same value. Let us denote this common value by $b=\alpha_i+u_i$, so
$\alpha_i=b-u_i$. Now $\sum_i\alpha_i=mb-\sum_iu_i=mb$.
\end{proof}

Using Theorem~\ref{th:sum-max-aux}, minimizing
function~\refeq{sum-max-fun} can be transformed to minimizing the
function
\[
g(x,u) = \max_{i,j}(a_{ij}^Tx+b_{ij}+u_i)
\]
subject to $\sum_iu_i=0$. Minimization over~$u$ can be done in closed
form, interlacing iterations of coordinate minimization over~$x$.



\appendix

\section{Proof of Theorem~\ref{th:e-converges}}

\subsection{Properties of the Algorithm}
\label{sec:proof-prelim-alg}


\begin{proposition}
\label{th:monotone}
For every $y\in\bb R^m$ and $j\in[n]$ we have $\max p_j(y)\le \max y$.
\end{proposition}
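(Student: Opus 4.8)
The plan is to unfold the definition of the map $p_j$ and compare the updated components against $\max y$ directly. Recall that $p_j$ acts by $p_j(y)_i = y_i + a_{ij}x_j^*$ for all $i\in[m]$, where $x_j^*$ is the (unique, by the standing consistency assumption) minimizer over $t\in\bb R$ of the univariate convex piecewise-affine function
\[
\psi(t) = \max_{i\mid a_{ij}\neq0}(a_{ij}t + y_i) ,
\]
i.e.\ $x_j^*$ is the solution of equation~\refeq{fixedpt-y'}. The key observation is that minimizing a function can only decrease its value relative to any fixed argument, in particular relative to $t=0$, at which $\psi$ equals $\max_{i\mid a_{ij}\neq0} y_i$.

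Concretely, I would split $[m]$ into the indices with $a_{ij}=0$ and those with $a_{ij}\neq0$. For $i$ with $a_{ij}=0$ the component is unchanged, $p_j(y)_i = y_i \le \max y$. For the remaining indices,
\[
\max_{i\mid a_{ij}\neq0} p_j(y)_i = \max_{i\mid a_{ij}\neq0}(a_{ij}x_j^* + y_i) = \psi(x_j^*) = \min_{t\in\bb R}\psi(t) \le \psi(0) = \max_{i\mid a_{ij}\neq0} y_i \le \max y .
\]
Taking the maximum over both groups of indices yields $\max p_j(y)\le\max y$.

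There is essentially no obstacle here: the statement is a one-line consequence of the fact that Step~2 of the algorithm performs a minimization, so the only thing to be careful about is that the affine functions not depending on $x_j$ are left untouched (hence trivially still bounded by $\max y$), while the affine functions that do depend on $x_j$ have their common ``upper envelope'' $\psi$ brought down from $\psi(0)$ to its minimum. If anything, the only point worth stating explicitly is that $\psi(0)$ coincides with the pre-update value $\max_{i\mid a_{ij}\neq0} y_i$, which is what makes the inequality $\min_t\psi(t)\le\psi(0)$ useful.
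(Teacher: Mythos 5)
Your proof is correct and is essentially the paper's argument made explicit: the paper disposes of this proposition in one sentence (``coordinate minimization never increases the objective''), and your unfolding via $\min_t\psi(t)\le\psi(0)=\max_{i\mid a_{ij}\neq0}y_i$ together with the observation that components with $a_{ij}=0$ are untouched is exactly the detail that sentence suppresses. In particular you correctly handle the only mildly delicate point — that the unique rule minimizes $\phi_j$ rather than $f$ itself — so nothing is missing.
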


\begin{proof}
This just says that coordinate minimization never increase the
objective function.
\end{proof}

For $y\in\bb R^m$ and $z\in\bb R$, we denote
\begin{equation}
I(y,z)=\{\,i\in[m]\mid y_i\ge z\,\} .
\label{eq:I}
\end{equation}
In particular, note that $\argmax y=I(y,\max y)$ and
$\argmax\nolimits^\epsilon y = I(y,\max y-\epsilon)$.



\begin{proposition}
\label{th:diffusion-scons-j}
Let $y\in\bb R^m$ and $j\in[n]$.
\begin{itemize}
\item If the set $\{\,\sgn a_i\mid i\in I(y,\max y)\,\}$ is consistent in coordinate~$j$,
then $I(p_j(y),\max y)=I(y,\max y)$.
\item Otherwise, $I(p_j(y),\max y)=I(y,\max y)\setminus \{\,i\mid a_{ij}\neq0\,\}$.
\end{itemize}
\end{proposition}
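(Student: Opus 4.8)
The plan is to track the set $\argmax y=I(y,\max y)$ through the single update $p_j$. Write $M=\max y$ and $S=I(y,M)$, and split $S=S_0\cup S_-\cup S_+$ according to whether $a_{ij}$ is zero, negative, or positive for $i\in S$; thus $S\setminus\{\,i\mid a_{ij}\neq0\,\}=S_0$, and condition~\refeq{SC} for $\{\,\sgn a_i\mid i\in S\,\}$ in coordinate $j$ says precisely that $S_-\neq\emptyset$ iff $S_+\neq\emptyset$. By the definition of $p_j$, $y'=p_j(y)$ has $y'_i=a_{ij}x_j^*+y_i$, where $x_j^*$ is the unique solution of~\refeq{fixedpt-y'}, i.e.\ the unique minimizer of $g(x_j)=\max_{i\mid a_{ij}\neq0}(a_{ij}x_j+y_i)$. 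By Proposition~\ref{th:monotone}, $\max y'\le M$, so $I(y',M)=\{\,i\mid y'_i=M\,\}$, and since $y'_i=y_i$ whenever $a_{ij}=0$ we always have $S_0\subseteq I(y',M)$. If no index has $a_{ij}\neq0$, then $p_j(y)=y$ and both sides of~\refeq{SC} are false, so we land in the first case with nothing to prove; hence assume some $a_{ij}\neq0$, and then the standing sign-consistency assumption forces both $\{\,i\mid a_{ij}<0\,\}$ and $\{\,i\mid a_{ij}>0\,\}$ to be nonempty. Consequently $g$ is coercive with unique minimizer $x_j^*$, strictly decreasing on $(-\infty,x_j^*]$ and strictly increasing on $[x_j^*,\infty)$; this is what will supply the strict inequalities below.

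The pivot of the argument is the function $h(x_j)=\max_{i\mid a_{ij}<0}(a_{ij}x_j+y_i)-\max_{i\mid a_{ij}>0}(a_{ij}x_j+y_i)$, which is strictly decreasing and has $x_j^*$ as its unique zero; hence the sign of $x_j^*$ is the opposite of the sign of $h(0)=\max_{i\mid a_{ij}<0}y_i-\max_{i\mid a_{ij}>0}y_i$. I would then split into three cases according to $S_-,S_+$. If $S_-\neq\emptyset$ and $S_+\neq\emptyset$, both maxima in $h(0)$ equal $M$, so $h(0)=0$, hence $x_j^*=0$, hence $y'=y$ and $I(y',M)=S$. If exactly one of $S_-,S_+$ is nonempty, say $S_+\neq\emptyset$ and $S_-=\emptyset$, then every $i$ with $a_{ij}<0$ lies outside $S$, so $\max_{i\mid a_{ij}<0}y_i<M=\max_{i\mid a_{ij}>0}y_i$, giving $h(0)<0$ and $x_j^*<0$; the mirror sub-case gives $x_j^*>0$. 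If $S_-=S_+=\emptyset$, every index with $a_{ij}\neq0$ lies outside $S$, so $g(0)=\max_{i\mid a_{ij}\neq0}y_i<M$.

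It then remains to read off $I(y',M)=\{\,i\mid y'_i=M\,\}$ in the two cases where $x_j^*$ need not vanish. When $x_j^*<0$ (so $S_-=\emptyset$, $S_+\neq\emptyset$, and $g(0)=M$): for $i\in S_+$, $y'_i=M+a_{ij}x_j^*<M$; for $i\notin S$ with $a_{ij}\neq0$, $y'_i\le g(x_j^*)<g(0)=M$ since $g$ is strictly increasing on $[x_j^*,\infty)\ni0$; and $y'_i=y_i<M$ for $i\notin S$ with $a_{ij}=0$. So $I(y',M)=S_0=S\setminus\{\,i\mid a_{ij}\neq0\,\}$, and the case $x_j^*>0$ is symmetric. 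When $S_-=S_+=\emptyset$: here $g(x_j^*)\le g(0)<M$, so $y'_i<M$ for every $i\notin S$ exactly as before, while $y'_i=y_i=M$ for $i\in S=S_0$, giving $I(y',M)=S$. Collecting the outcomes, $I(y',M)=S$ precisely when $S_-,S_+$ are either both empty or both nonempty, i.e.\ precisely when $\{\,\sgn a_i\mid i\in S\,\}$ is consistent in coordinate $j$, and $I(y',M)=S_0$ otherwise. The only step that needs genuine care will be the monotonicity argument fixing the sign of $x_j^*$ from $h(0)$; everything else is bookkeeping, so I do not expect a real obstacle.
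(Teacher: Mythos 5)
Your argument is correct and takes essentially the same route as the paper's proof: the same three-way case split according to whether the argmax contains negative- and/or positive-slope indices, with your sign-of-$x_j^*$ and unimodality-of-$g$ reasoning playing the role of the paper's observation that the updated common value $\alpha'$ lies between $\alpha^-=\max_{i\mid a_{ij}<0}y_i$ and $\alpha^+=\max_{i\mid a_{ij}>0}y_i$ (your version in fact makes the needed strict inequalities $y'_i<\max y$ more explicit). One slip to fix: since $h$ is strictly decreasing with $h(x_j^*)=0$, the sign of $x_j^*$ equals, not opposes, the sign of $h(0)$; your case-by-case conclusions ($h(0)<0\Rightarrow x_j^*<0$, and the mirror case) are the correct ones, so nothing downstream is affected.
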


\begin{proof}
Denote $y'=p_j(y)$ and
\[
\alpha^-=\displaystyle\max_{i\mid a_{ij}<0}y_i, \qquad
\alpha^+=\displaystyle\max_{i\mid a_{ij}>0}y_i, \qquad
\alpha'=\displaystyle\max_{i\mid a_{ij}<0}y'_i = \max_{i\mid a_{ij}\neq0}y'_i = \max_{i\mid a_{ij}>0}y'_i .
\]
It follows from the definition of~$p_j$ that $y'_i=y_i$ for all~$i$
with $a_{ij}=0$ and
that $\alpha'$ lies between $\alpha^-$ and $\alpha^+$.

By Definition~\ref{def:SC}, the set
$\{\,\sgn a_i\mid i\in I(y,\max y)\,\}$ is consistent in coordinate~$j$ iff
\begin{equation}
\exists i\in I(y,\max y){:}\ a_{ij}<0 \quad\Longleftrightarrow\quad \exists i'\in I(y,\max y){:}\ a_{i'j}>0 .
\label{eq:argmax-SC'}
\end{equation}
Using the above observations, we see that:
\begin{itemize}
\item If both sides of~\refeq{argmax-SC'} are true, then
$\alpha^-=\alpha^+=\alpha'$, hence $y'=y$, hence $I(y',\max y)=I(y,\max y)$.
\item If both sides of~\refeq{argmax-SC'} are false, then
$\alpha^-,\alpha^+,\alpha'<\max y$. Hence 
$y_i,y_i'<\max y$ for all~$i$ with $a_{ij}\neq0$. Hence $I(y',\max y)=I(y,\max y)$.
\item If, say, the LHE of~\refeq{argmax-SC'} is true and
the RHS is false, then $\alpha^-=\max y$ and
$\alpha^+,\alpha'<\max y$. Hence $y'_i<\max
y$ for all~$i$ with $a_{ij}\neq0$.
Hence $I(y',\max y)=I(y,\max y)\setminus \{\,i\mid a_{ij}\neq0\,\}$.
\qedhere
\end{itemize}
\end{proof}

\begin{proposition}
\label{th:diffusion-scons}
For every $y\in\bb R^m$ and every $k\ge m$ we have
\[
\{\, \sgn a_i\mid i\in I(p^k(y),\max y)\,\} =
\cons\{\, \sgn a_i \mid i\in I(y,\max y) \,\} .
\]
\end{proposition}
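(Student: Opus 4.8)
The plan is to combine Proposition~\ref{th:diffusion-scons-j} (which describes a single iteration $p_j$ acting on the ``top level'' index set $I(y,\max y)$) with the characterization of $\cons$ via the enforcing-consistency algorithm of Definition~\ref{def:SCalg}. The key observation is that the level set at threshold $\max y$ can only shrink as we apply the maps $p_j$: by Proposition~\ref{th:monotone} the global maximum never increases, and Proposition~\ref{th:diffusion-scons-j} shows that each $p_j$ either leaves $I(\,\cdot\,,\max y)$ unchanged (when the corresponding sign set is consistent in coordinate $j$) or removes exactly the indices $i$ with $a_{ij}\neq 0$ — which is precisely one step of ``enforcing consistency'' applied to the set $S=\{\,\sgn a_i\mid i\in I(y,\max y)\,\}$ in coordinate $j$. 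So running one cycle $p=p_n\circ\cdots\circ p_1$ realizes (a prefix of) the enforcing-consistency algorithm, sweeping once through all coordinates.

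First I would set $z=\max y$ and track the sequence of index sets $I_k = I(p^k(y),z)$ for $k=0,1,2,\dots$, noting $I_0\supseteq I_1\supseteq\cdots$. The crucial claim is that after one cycle, the sign set $\{\,\sgn a_i\mid i\in I_1\,\}$ is obtained from $\{\,\sgn a_i\mid i\in I_0\,\}$ by sweeping once through coordinates $j=1,\dots,n$, at each step deleting all vectors nonzero in the current coordinate if that coordinate is currently inconsistent; this follows by applying Proposition~\ref{th:diffusion-scons-j} successively to $p_1,p_2,\dots,p_n$, using that $\max(p_{j-1}\circ\cdots\circ p_1)(y)\le z$ so that each intermediate ``$I(\,\cdot\,,z)$'' is still a genuine top-or-subthreshold set to which the proposition applies. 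Then I would argue that after $m$ cycles the process has stabilized: each cycle that actually removes something removes at least one index (so at most $|I_0|\le m$ productive cycles can occur), and once a cycle removes nothing, the sign set is consistent in every coordinate, i.e.\ it is a fixed point of enforcing consistency. Finally, invoking the theorem that enforcing consistency always returns $\cons S$ (regardless of the order of coordinate choices, and in particular allowing the repeated full sweeps that our cycles perform), the stabilized set equals $\cons\{\,\sgn a_i\mid i\in I(y,z)\,\}$, and for all $k\ge m$ we have $I_k=I_m$, giving the claimed equality.

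The main obstacle I anticipate is bookkeeping the threshold: Proposition~\ref{th:diffusion-scons-j} is stated for $I(y,\max y)$, but within a cycle the global maximum may strictly decrease after some $p_j$, so I must be careful to keep using the \emph{original} threshold $z$ rather than the current maximum. The resolution is that $I(\,\cdot\,,z)$ with a fixed $z\ge$ (current max) still behaves well: if the current max has dropped below $z$ then $I(\,\cdot\,,z)=\emptyset$ and the set is trivially consistent so nothing further happens (and indeed the conclusion holds vacuously since $\cons\emptyset=\emptyset$); otherwise $z$ still equals the current max and the proposition applies verbatim. A second, minor point is to confirm that ``enforcing consistency'' is insensitive to performing redundant consistent-coordinate checks and to cycling through coordinates repeatedly rather than only choosing inconsistent ones — but this is immediate from the proof of that theorem, since a consistent coordinate induces no removal, and the proof only used that every consistent subset survives every step, which still holds here.
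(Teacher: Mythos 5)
Your proposal is correct and follows essentially the same route as the paper's proof: interpret Proposition~\ref{th:diffusion-scons-j} as saying that the cycles of the algorithm perform the enforcing-consistency algorithm on $\{\,\sgn a_i\mid i\in I(y,\max y)\,\}$, note that each productive cycle removes at least one of the at most $m$ indices so the set is consistent and stable after $m$ cycles, and invoke the theorem that enforcing consistency yields $\cons S$. Your explicit handling of the fixed threshold $z=\max y$ (versus the current maximum used in Proposition~\ref{th:diffusion-scons-j}, with the empty-level-set case once the maximum drops) is a point the paper's proof passes over silently, and you resolve it correctly.
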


\begin{proof}
Proposition~\ref{th:diffusion-scons-j} shows that the algorithm in
fact enforces consistency (see Definition~\ref{def:SCalg}) of the
set $\{\, \sgn a_i\mid i\in I(p^k(y),\max y)\,\}$. Moreover, in every
$m$~iterations each coordinate is visited and hence the set shrinks
unless it is already consistent. As the set initially has no
more than $m$~elements, after $m$~applications of~$p$ the set stops
shrinking and becomes consistent.
\end{proof}

\begin{proposition}
\label{th:diffusion-abc}
Let $y\in\bb R^m$.
\begin{itemize}
\item[(a)] If $\cons\{\, \sgn a_i \mid i\in \argmax y \,\}\neq\emptyset$, then for every $k\in\bb N$ we have $\max p^k(y)=\max y$.
\item[(b)] If $\cons\{\, \sgn a_i \mid i\in \argmax y \,\}\neq\emptyset$, then the set $\{\, \sgn a_i \mid i\in \argmax p^m(y) \,\}$ is consistent.
\item[(c)] If $\cons\{\, \sgn a_i \mid i\in \argmax y \,\}=\emptyset$, then $\max p^m(y)<\max y$.
\end{itemize}
\end{proposition}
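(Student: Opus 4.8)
\emph{Proof proposal.} All three parts follow from Propositions~\ref{th:monotone}, \ref{th:diffusion-scons-j} and~\ref{th:diffusion-scons} together with the elementary properties of the $\cons$ operator; the proof is essentially bookkeeping. The organizing observation is that, by Proposition~\ref{th:diffusion-scons-j}, applying a single map $p_j$ to~$y$ transforms the set $\{\,\sgn a_i\mid i\in I(y,\max y)\,\}$ by exactly one iteration (possibly a vacuous one) of the consistency-enforcing algorithm of Definition~\ref{def:SCalg}, with the threshold $\max y$ kept at the value of the \emph{original} vector throughout.

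For part~(a) I would prove, by induction on the number of elementary steps~$p_j$ applied so far, that $\cons\{\,\sgn a_i\mid i\in\argmax y\,\}$ is contained in $\{\,\sgn a_i\mid i\in I(y',\max y)\,\}$ for every iterate~$y'$; this is precisely the induction used in the proof that enforcing consistency yields $\cons S$, since whenever Proposition~\ref{th:diffusion-scons-j} deletes the indices $\{\,i\mid a_{ij}\neq0\,\}$ the current sign set is inconsistent in coordinate~$j$, and Definition~\ref{def:SC} then forces every member of any consistent subset to have $j$-th entry~$0$, so nothing in the closure is ever removed. Since the hypothesis makes this closure non-empty, $I(p^k(y),\max y)\neq\emptyset$ for all~$k$, i.e.\ some component of $p^k(y)$ is at least $\max y$; combined with $\max p^k(y)\le\max y$, obtained by iterating Proposition~\ref{th:monotone}, this yields $\max p^k(y)=\max y$.

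Given~(a), part~(b) is immediate: $\max p^m(y)=\max y$ implies $\argmax p^m(y)=I(p^m(y),\max y)$, and Proposition~\ref{th:diffusion-scons} with $k=m$ identifies $\{\,\sgn a_i\mid i\in\argmax p^m(y)\,\}$ with $\cons\{\,\sgn a_i\mid i\in\argmax y\,\}$, which is consistent by definition of the closure. For part~(c), suppose for contradiction that $\max p^m(y)=\max y$ (the maximum cannot increase, by Proposition~\ref{th:monotone}); then again $\argmax p^m(y)=I(p^m(y),\max y)$, so Proposition~\ref{th:diffusion-scons} gives $\{\,\sgn a_i\mid i\in\argmax p^m(y)\,\}=\cons\{\,\sgn a_i\mid i\in\argmax y\,\}=\emptyset$ by hypothesis, forcing $\argmax p^m(y)=\emptyset$, which is absurd; hence $\max p^m(y)<\max y$.

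The one point deserving care — and the place I expect the argument to need an explicit sentence rather than a hand-wave — is the passage between the index family $I(\cdot,\max y)\subseteq[m]$ and the sign set $S\subseteq\{-1,0,1\}^n$ handled by the enforcing-consistency algorithm: one should note that deleting all $i$ with $a_{ij}\neq0$ from $I$ is the same as deleting all $s$ with $s_j\neq0$ from $S=\{\,\sgn a_i\mid i\in I\,\}$, and that $I=\emptyset$ exactly when $S=\emptyset$, so that the appeal to the ``enforcing consistency yields $\cons S$'' theorem (and to the induction therein) is legitimate.
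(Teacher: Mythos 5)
Your proposal is correct and follows essentially the same route as the paper: combine the monotonicity of $\max$ under $p_j$ (Proposition~\ref{th:monotone}) with the fact that iterating $p_j$ enforces consistency on $\{\,\sgn a_i\mid i\in I(\cdot,\max y)\,\}$ (Propositions~\ref{th:diffusion-scons-j} and~\ref{th:diffusion-scons}), using $\argmax y=I(y,\max y)$ and the equivalence $I(p^k(y),\max y)\neq\emptyset\Leftrightarrow\max p^k(y)=\max y$. The only cosmetic difference is that for part~(a) you re-run the closure-preservation induction explicitly for every intermediate iterate, whereas the paper gets all $k$ directly from Proposition~\ref{th:diffusion-scons} (for $k\ge m$) together with the monotone sandwich $\max y=\max p^m(y)\le\max p^k(y)\le\max y$ for $k<m$.
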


\begin{proof}
By Proposition~\ref{th:monotone}, for every $k\in\bb N$ we have
$\max p^k(y)\le\max y$. By~\refeq{I} we have:
\begin{itemize}
\item If $I(p^k(y),\max y)\neq\emptyset$, then $\max p^k(y)=\max y$.
\item If $I(p^k(y),\max y)=\emptyset$, then $\max p^k(y)<\max y$.
\end{itemize}
Noting that $\argmax y=I(y,\max y)$, the claims now follow from
Proposition~\ref{th:diffusion-scons}.
\end{proof}

\subsection{Continuity and Boundedness}
\label{sec:proof-prelim-pe}

\begin{proposition}
\label{th:univar-cont}
Let $a_1\...a_m\in\bb R$ be such that $\{\,\sgn a_i\mid
i\in[m]\,\}=\{-1,1\}$. Then the function $\xi{:}\ \bb R^m\to\bb R$
given by
\begin{equation}
\xi(y) = \xi(y_1\...y_m) = \argmin_{x\in\bb R} \max_{i\in[m]}(a_ix+y_i)
\label{eq:univar}
\end{equation}
is continuous.
\end{proposition}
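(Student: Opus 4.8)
The plan is to identify $\xi(y)$ with the unique root of an explicit piecewise-affine equation and then run a routine sign-change argument. Partition $[m]$ into $P=\{\,i\mid a_i>0\,\}$ and $N=\{\,i\mid a_i<0\,\}$; by the hypothesis $\{\,\sgn a_i\mid i\in[m]\,\}=\{-1,1\}$ every $a_i$ is non-zero and both $P$ and $N$ are non-empty. Put $u(x,y)=\max_{i\in P}(a_ix+y_i)$ and $v(x,y)=\max_{i\in N}(a_ix+y_i)$, so that $\max_{i\in[m]}(a_ix+y_i)=\max\{u(x,y),v(x,y)\}$. For fixed $y$, the function $u(\cdot,y)$ is a pointwise maximum of affine functions all of positive slope, hence strictly increasing with $u\to+\infty$ as $x\to+\infty$; symmetrically $v(\cdot,y)$ is strictly decreasing with $v\to+\infty$ as $x\to-\infty$. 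Thus $x\mapsto\max\{u,v\}$ is coercive, so a minimizer exists, and $h(x,y):=u(x,y)-v(x,y)$ is, for each fixed $y$, continuous, strictly increasing, negative for $x$ small and positive for $x$ large; it therefore has a unique zero, and that zero equals $\xi(y)$, because $\max\{u,v\}$ strictly decreases on $\{h<0\}$ and strictly increases on $\{h>0\}$. (This incidentally re-derives the uniqueness of the univariate minimizer asserted around~\refeq{fixedpt-y'}.)

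With $\xi(y)$ identified as the unique root of $h(\cdot,y)$, continuity follows by a standard argument. The map $h$ is jointly continuous on $\bb R\times\bb R^m$, being assembled from affine functions of $(x,y)$ by finitely many $\max$ operations and a subtraction. Fix $y^0\in\bb R^m$, set $x^0=\xi(y^0)$, and let $\epsilon>0$. Since $h(\cdot,y^0)$ is strictly increasing with root $x^0$, we have $h(x^0-\epsilon,y^0)<0<h(x^0+\epsilon,y^0)$. By joint continuity there is $\delta>0$ such that $\|y-y^0\|<\delta$ implies $h(x^0-\epsilon,y)<0$ and $h(x^0+\epsilon,y)>0$. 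For any such $y$, the unique root $\xi(y)$ of the strictly increasing function $h(\cdot,y)$ lies strictly between $x^0-\epsilon$ and $x^0+\epsilon$, i.e.\ $|\xi(y)-\xi(y^0)|<\epsilon$. Hence $\xi$ is continuous at $y^0$, and $y^0$ was arbitrary.

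I do not expect a genuine obstacle here; the only step requiring care is the equivalence ``$\xi(y)$ minimizes $\max\{u,v\}$'' $\Longleftrightarrow$ ``$u(\xi(y),y)=v(\xi(y),y)$'', which rests on the strict monotonicity of $u$ and $v$ and hence on $P,N\neq\emptyset$ together with all $a_i$ being non-zero. An alternative, more structural proof would observe that on each polyhedral cone of $y$-values for which a fixed pair $(i,i')\in P\times N$ attains the two maxima at the crossing point one has $\xi(y)=(y_{i'}-y_i)/(a_i-a_{i'})$, which is affine in $y$, so $\xi$ is continuous piecewise-affine; but verifying that these cones cover $\bb R^m$ and agree on overlaps is more laborious than the direct root argument, so I would present the latter.
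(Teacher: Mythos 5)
Your proof is correct, but it takes a genuinely different route from the paper's. You characterize $\xi(y)$ as the unique zero of the strictly increasing function $h(\cdot,y)=u(\cdot,y)-v(\cdot,y)$, where $u$ and $v$ are the maxima over the positive- and negative-slope functions respectively, and then deduce continuity by a sign-persistence argument: joint continuity of $h$ forces the root of $h(\cdot,y)$ to stay within $(\xi(y^0)-\epsilon,\xi(y^0)+\epsilon)$ for $y$ near $y^0$. This is sound (the hypothesis $\{\,\sgn a_i\mid i\in[m]\,\}=\{-1,1\}$ indeed gives that all $a_i\neq0$ and both slope signs occur, so $u$ is strictly increasing, $v$ strictly decreasing, and the root exists, is unique, and is the minimizer), and as a bonus it re-derives the uniqueness of the univariate minimizer assumed around~\refeq{fixedpt-y'}. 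The paper instead argues locally: it reduces to continuity at $y=0$ by noting that near any $y$ the minimizer depends only on the affine functions active at the minimizer and by translating the minimum to the origin, and then uses the explicit crossing formula $\xi(y)=(y_i-y_j)/(a_j-a_i)$ for some pair with $a_i<0<a_j$ to get the quantitative bound $|\xi(y)|\le 2\delta\max_{a_i<0<a_j}(a_j-a_i)^{-1}$ when $\|y\|_\infty\le\delta$. The paper's route yields an explicit local Lipschitz modulus (stronger than bare continuity), whereas your route buys a cleaner, fully self-contained argument that avoids the locality-and-translation reduction, which the paper states without detailed justification.
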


\begin{proof}
On a neighborhood of any point $y\in\bb R^m$, function~$\xi$ depends only
on the coordinates for which the affine functions $a_ix+y_i$
are active at~$y$. Moreover, we can move the minimum to the origin
without loss of generality. Therefore, to show that
function~\refeq{univar} is continuous on~$\bb R^m$, it suffices to
show that the function of the form~\refeq{univar} is continuous at the
point $y=0$.

For any $y\in\bb R^m$, $\xi(y)$ is the $x$-coordinate of the
intersection of the graphs of two affine functions $a_ix+y_i$ and
$a_jx+y_j$, one with negative and one with positive slope. Thus,
$\xi(y)=(y_i-y_j)/(a_j-a_i)$ for some $(i,j)$ such that $a_i<0$ and
$a_j>0$. Therefore,
\[
\|y\|_\infty\le\delta
\quad\Longrightarrow\quad
|\xi(y)|
\le \max_{i\mid a_i<0} \max_{j\mid a_j>0} \frac{|y_i-y_j|}{a_j-a_i}
\le \delta \max_{i\mid a_i<0} \max_{j\mid a_j>0} \frac{2}{a_j-a_i} .
\]
This shows that function~$\xi$ is continuous at $y=0$.
\end{proof}

\begin{proposition}
\label{th:pj-cont}
For every $j\in[n]$, the map~$p_j$ is continuous.
\end{proposition}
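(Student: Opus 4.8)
The plan is to reduce continuity of $p_j$ to continuity of the univariate-minimizer map $\xi$ from Proposition~\ref{th:univar-cont}. Recall that $p_j(y)$ is computed by first finding the solution $x_j^*$ of equation~\refeq{fixedpt-y'} and then setting $y_i\gets a_{ij}x_j^*+y_i$ for all $i\in[m]$. In matrix form, $p_j(y) = y + x_j^*(y)\,a^{(j)}$, where $a^{(j)}=(a_{1j}\...a_{mj})\in\bb R^m$ is the $j$-th column of $A$ and $x_j^*(y)$ is the scalar defined by~\refeq{fixedpt-y'}. Since $y\mapsto y + t\,a^{(j)}$ is (jointly) continuous in $(y,t)$, it suffices to show that $y\mapsto x_j^*(y)$ is a continuous function of $y\in\bb R^m$.

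Next I would connect $x_j^*(y)$ to the map $\xi$ of Proposition~\ref{th:univar-cont}. By the standing assumption, the set $\{\,\sgn a_i\mid i\in[m]\,\}$ is consistent, hence consistent in coordinate $j$; by Definition~\ref{def:SC} this means that either $a_{ij}=0$ for all $i$, or there exist both an $i$ with $a_{ij}<0$ and an $i'$ with $a_{i'j}>0$. In the first (degenerate) case $\phi_j$ does not depend on $x_j$; one should treat this either by excluding such $j$ from the update or by declaring $x_j^*(y)=0$, and then $p_j(y)=y$ is trivially continuous. In the nondegenerate case, restrict to the index set $J=\{\,i\in[m]\mid a_{ij}\neq0\,\}$; then $\{\,\sgn a_{ij}\mid i\in J\,\}=\{-1,1\}$, and by definition
\[
x_j^*(y) = \argmin_{x_j\in\bb R}\max_{i\in J}(a_{ij}x_j + y_i) = \xi\bigl((y_i)_{i\in J}\bigr),
\]
where $\xi$ is exactly the function~\refeq{univar} for the scalars $(a_{ij})_{i\in J}$. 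By Proposition~\ref{th:univar-cont}, $\xi$ is continuous on $\bb R^{|J|}$, and the coordinate projection $y\mapsto (y_i)_{i\in J}$ is continuous, so $x_j^*$ is continuous as a composition. Therefore $p_j(y)=y+x_j^*(y)\,a^{(j)}$ is continuous, being built from continuous maps by addition and scalar multiplication.

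The only subtlety—and the one place the argument needs care—is the degenerate case $a_{ij}=0$ for all $i$: there equation~\refeq{fixedpt-y'} and the minimizer are not well-defined in the form used by Proposition~\ref{th:univar-cont}, so one must fix a convention (e.g. $x_j^*=0$, under which the whole update is the identity) to keep $p_j$ single-valued and continuous. Apart from this bookkeeping, the proof is a routine composition-of-continuous-maps argument resting entirely on Proposition~\ref{th:univar-cont}.
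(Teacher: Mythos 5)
Your proof is correct and follows essentially the same route as the paper, which also establishes continuity of $p_j$ as the composition of the univariate-minimizer map $\xi$ from Proposition~\ref{th:univar-cont} with the affine update $y_i\mapsto a_{ij}x_j^*+y_i$. Your explicit handling of the degenerate case where the $j$-th column of $A$ vanishes (fixing the convention $x_j^*=0$ so the update is the identity) is a small bookkeeping point the paper leaves implicit, but it does not change the argument.
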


\begin{proof}
Map~$p_j$ is continuous because it is a composition of function~$\xi$
from Proposition~\ref{th:univar-cont} and the affine map
$y_i\mapsto a_{ij}x_j+y_i$ (as given by Step~3 of coordinate
minimization).
\end{proof}

\begin{proposition}
\label{th:bounded-yk}
Let $0\in\rint\conv\{a_1\...a_m\}$.
Let $y\in\bb R^n$.
Then the sequence $(p^k(y))_{k\in\bb N}$ is bounded. 
\end{proposition}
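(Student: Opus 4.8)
The plan is to reduce boundedness of the sequence $(p^k(y))_{k\in\bb N}$ to two facts: the maximum $\max p^k(y)$ is bounded above (Proposition~\ref{th:monotone}), and it cannot drop too far below its limit without the algorithm stalling. First I would observe that $(\max p^k(y))_{k\in\bb N}$ is non-increasing by Proposition~\ref{th:monotone}; since it is a sequence of values of the convex piecewise-affine function $f$ along coordinate descent, it is also bounded below whenever $f$ is bounded below — and under the hypothesis $0\in\rint\conv\{a_1\...a_m\}$ we have in particular $0\in\conv\{a_1\...a_m\}$, so by the first item of Proposition~1 the function $f$ is bounded below. Hence $\max p^k(y)$ converges to some finite limit, call it $\mu$. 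So all iterates lie in the ``slab'' where the objective is between $\mu$ and $\max y$.

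Next I would invoke Proposition~\ref{th:bounded-y}: since $0\in\rint\conv\{a_1\...a_m\}$, the set $\{\,Ax'\mid x'\in\bb R^n,\ Ax'\le b'\,\}$ is bounded for any right-hand side $b'$ for which it is non-empty, equivalently the polyhedron $\{\,z\in\bb R^m\mid z\le b',\ z\in\aff(\text{range of }A)+\text{const}\,\}$... more directly: every iterate $y^k := p^k(y)$ lies in the affine set $\{\,A(x+x')+b\mid x'\in\bb R^n\,\}$ (the updates only add multiples of columns $a_{\cdot j}$ to $y$), and satisfies $y^k \le \max y \cdot \mathbf 1$ componentwise. An affine set of this form is a translate of the column space of $A$, i.e. it equals $\{\,Ax' + c\mid x'\in\bb R^n\,\}$ for a fixed $c$. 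The components of $y^k$ are thus bounded above by $\max y$, and I want to show they are bounded below as well. Fix any index $i$; I claim $y^k_i$ cannot go to $-\infty$ along a subsequence. The key point is the lower bound $\max y^k \ge \mu$: at every step there is some coordinate achieving value $\ge\mu$. Combined with the structure $y^k = Ax'_k + c$ and the boundedness statement of Proposition~\ref{th:bounded-y} applied to the set $\{\,Ax'\mid Ax'\le (\max y)\mathbf 1 - c\,\}$, which is bounded precisely because $0\in\rint\conv\{a_1\...a_m\}$, we conclude all coordinates of $Ax'_k$ — hence of $y^k$ — lie in a fixed compact box.

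Let me state the core step cleanly: the set $B := \{\,Ax'+c\mid x'\in\bb R^n,\ Ax'+c\le(\max y)\mathbf 1\,\}$ is bounded by Proposition~\ref{th:bounded-y} (after the harmless affine shift by~$c$), and every iterate $y^k$ belongs to~$B$ by Propositions~\ref{th:monotone} and the update rule; hence $(y^k)_{k\in\bb N}\subseteq B$ is bounded. The main obstacle I anticipate is being careful about the affine-shift bookkeeping: Proposition~\ref{th:bounded-y} is stated for $\{\,Ax\mid Ax\le b\,\}$ with $b$ the \emph{fixed} vector from~\refeq{fun}, so I must either re-derive the elementary LP-duality argument with a general right-hand side $(\max y)\mathbf 1 - c$, or note explicitly that the boundedness conclusion there depends only on $0\in\rint\conv\{a_1\...a_m\}$ and not on which consistent (non-empty) right-hand side is used — the latter is immediate from the proof of Proposition~\ref{th:bounded-y}, which already quantifies over all $c\in\bb R^m$. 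Once that observation is in place, the proof is three lines.
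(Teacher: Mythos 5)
Your proposal is correct and takes essentially the same route as the paper: bound every iterate componentwise above by $(\max y)\mathbf{1}$ using Proposition~\ref{th:monotone}, note that the iterates stay in the translate $\{\,Ax'+b\mid x'\in\bb R^n\,\}$ of the column space, and invoke Proposition~\ref{th:bounded-y}, whose boundedness conclusion depends only on $0\in\rint\conv\{a_1,\ldots,a_m\}$ and not on the particular right-hand side (a point the paper leaves implicit and you spell out). Your opening detour through the limit $\mu$ of $(\max p^k(y))$ is unnecessary, since Proposition~\ref{th:bounded-y} already bounds all components below as well as above.
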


\begin{proof}
By Proposition~\ref{th:monotone}, for every~$k$ we have $p^k(y)\le\max y$.
The claim now follows from Proposition~\ref{th:bounded-y}.
\end{proof}

\begin{proposition}
\label{th:Ieps}
Let $y,y'\in\bb R^m$, $\epsilon\ge0$, $\delta\ge0$, and
$\|y-y'\|_\infty\le\delta$. Then $\argmax\nolimits^\epsilon y \subseteq
\argmax\nolimits^{\epsilon+2\delta} y'$.
\end{proposition}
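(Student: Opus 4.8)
The plan is to unwind the definition \refeq{argmax-eps} of $\argmax\nolimits^\epsilon$ and chase two elementary $\ell_\infty$ estimates. Fix an index $i\in\argmax\nolimits^\epsilon y$; by definition this means $y_i+\epsilon\ge\max y$, and the goal is to show that $y'_i+(\epsilon+2\delta)\ge\max y'$, i.e.\ that $i\in\argmax\nolimits^{\epsilon+2\delta}y'$.

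First I would record the two consequences of the hypothesis $\|y-y'\|_\infty\le\delta$ that are needed. On the one hand, $y'_i\ge y_i-\delta$ directly from $|y_i-y'_i|\le\delta$. On the other hand, for every $k\in[m]$ we have $y'_k\le y_k+\delta\le\max y+\delta$, and taking the maximum over~$k$ gives $\max y'\le\max y+\delta$. (Symmetrically one also has $\max y\le\max y'+\delta$, but only the first direction is used.)

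Combining these, $y'_i+\epsilon+2\delta\ge(y_i-\delta)+\epsilon+2\delta=(y_i+\epsilon)+\delta\ge\max y+\delta\ge\max y'$, which is exactly the condition defining membership of~$i$ in $\argmax\nolimits^{\epsilon+2\delta}y'$. Since $i$ was an arbitrary element of $\argmax\nolimits^\epsilon y$, the claimed inclusion follows.

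There is no genuine obstacle here: the only point to watch is the bookkeeping of the slack, namely that the perturbation of the maximal value $\max y$ costs one factor of~$\delta$ while the perturbation of the single coordinate $y_i$ costs another, so that the two together account for the $2\delta$ appearing in the superscript; a bound with only $\delta$ would not suffice.
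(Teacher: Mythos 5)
Your proof is correct and follows essentially the same route as the paper: both unwind the definition \refeq{argmax-eps} and combine the coordinatewise bound $|y_i-y'_i|\le\delta$ with the bound $\max y'\le\max y+\delta$ to account for the $2\delta$ slack. No difference worth noting.
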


\begin{proof}
Let $\|y-y'\|_\infty\le\delta$, i.e., 
$
-\delta \le y_i-y'_i \le \delta
$
for every~$i$.
This implies
$
-\delta \le \max y - \max y' \le \delta .
$
By these inequalities, for every~$i$ we have the implication
\begin{equation}
\max y - y_i \le \epsilon
\quad\Longrightarrow\quad
\max y' - y'_i \le \epsilon+2\delta .
\label{eq:e-cont-proof}
\end{equation}
By~\refeq{argmax-eps}, this means that
$\argmax\nolimits^\epsilon y \subseteq
\argmax\nolimits^{\epsilon+2\delta} y'$.
\end{proof}

\begin{proposition}
\label{th:e-cont}
The function~$e$ is continuous.
\end{proposition}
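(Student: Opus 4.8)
The plan is to show that $e$ is in fact $2$-Lipschitz with respect to the $\ell_\infty$-norm, which immediately gives continuity. The two ingredients are Proposition~\ref{th:Ieps} and the fact that the consistency closure operator $\cons$ is non-increasing. First I would record a monotonicity property of the set whose infimum defines $e$ in \refeq{e}. Write $E(y)=\{\,\epsilon\ge0\mid \cons\{\,\sgn a_i\mid i\in\argmax\nolimits^\epsilon y\,\}\neq\emptyset\,\}$, so that $e(y)=\inf E(y)$. For $0\le\epsilon_1\le\epsilon_2$, formula~\refeq{argmax-eps} gives $\argmax\nolimits^{\epsilon_1}y\subseteq\argmax\nolimits^{\epsilon_2}y$, hence $\{\,\sgn a_i\mid i\in\argmax\nolimits^{\epsilon_1}y\,\}\subseteq\{\,\sgn a_i\mid i\in\argmax\nolimits^{\epsilon_2}y\,\}$, and then by the non-increasing property of $\cons$ the corresponding closures are nested as well. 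Consequently, if $\epsilon_1\in E(y)$ then $\epsilon_2\in E(y)$; that is, $E(y)$ is an up-set, and therefore $\epsilon>e(y)$ already implies $\epsilon\in E(y)$.

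Next comes the Lipschitz estimate. Fix $y,y'\in\bb R^m$ with $\|y-y'\|_\infty\le\delta$ and let $\epsilon>e(y)$. Then $\epsilon\in E(y)$, so $\cons\{\,\sgn a_i\mid i\in\argmax\nolimits^\epsilon y\,\}\neq\emptyset$. Proposition~\ref{th:Ieps} yields $\argmax\nolimits^\epsilon y\subseteq\argmax\nolimits^{\epsilon+2\delta}y'$, hence $\{\,\sgn a_i\mid i\in\argmax\nolimits^\epsilon y\,\}\subseteq\{\,\sgn a_i\mid i\in\argmax\nolimits^{\epsilon+2\delta}y'\,\}$, and again by monotonicity of $\cons$ the latter closure contains the former, which is non-empty; thus $\epsilon+2\delta\in E(y')$ and $e(y')\le\epsilon+2\delta$. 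Letting $\epsilon\downarrow e(y)$ gives $e(y')\le e(y)+2\delta$. Exchanging the roles of $y$ and $y'$ gives $e(y)\le e(y')+2\delta$, so $|e(y)-e(y')|\le 2\delta$ whenever $\|y-y'\|_\infty\le\delta$, which proves that $e$ is continuous (indeed $2$-Lipschitz in the $\ell_\infty$-norm).

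I do not expect any real obstacle here; the proof is just a combination of Proposition~\ref{th:Ieps} with the non-increasing property of the consistency closure. The only point deserving a little care is the interplay with the infimum in \refeq{e}: one should use the up-set property of $E(y)$ so that the strict inequality $\epsilon>e(y)$ suffices to conclude $\epsilon\in E(y)$, rather than having to argue separately that the infimum is attained — although it is, since $\epsilon\mapsto\argmax\nolimits^\epsilon y$ is piecewise constant and takes only finitely many values.
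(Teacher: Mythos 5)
Your proof is correct and follows essentially the same route as the paper: combine Proposition~\ref{th:Ieps} with the monotonicity of consistency under taking supersets to get the two-sided estimate $|e(y)-e(y')|\le 2\delta$, i.e., $2$-Lipschitz continuity in the $\ell_\infty$-norm. Your extra care with the infimum (working with $\epsilon>e(y)$ and the up-set property of $E(y)$, then letting $\epsilon\downarrow e(y)$) only makes explicit a point the paper glosses over by evaluating directly at $\epsilon=e(y)$.
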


\begin{proof}
Let $\|y-y'\|_\infty\le\delta$. By~\refeq{e}, the set $\{\,\sgn a_i \mid
i\in \argmax\nolimits^{e(y)} y\,\}$ has a non-empty consistent subset. By
Proposition~\ref{th:Ieps}, the set $\{\,\sgn a_i \mid i\in
\argmax\nolimits^{e(y)+2\delta} y'\,\}$ has the same consistent
subset, therefore $e(y')\le e(y)+2\delta$. Similarly we prove that $e(y)\le
e(y')+2\delta$. Thus $|e(y)-e(y')|\le2\delta$.
\end{proof}

\begin{proposition}
\label{th:bounded-e}
Let $\cons\{\sgn a_i\mid i\in[m]\}\neq\emptyset$.
Then for every $y\in\bb R^n$ we have $e(y)\le\max y-\min y$.
\end{proposition}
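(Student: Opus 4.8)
The plan is to exhibit a single value of $\epsilon$ that is feasible in the infimum \refeq{e} defining $e(y)$, namely $\epsilon=\max y-\min y$, and observe that for this choice the index set $\argmax\nolimits^\epsilon y$ is all of $[m]$, so that the hypothesis applies verbatim.

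First I would set $\epsilon=\max y-\min y\ge0$ and compute $\argmax\nolimits^\epsilon y$ from its definition \refeq{argmax-eps}: we have $i\in\argmax\nolimits^\epsilon y$ iff $y_i+\epsilon\ge\max y$, i.e.\ iff $y_i\ge\max y-\epsilon=\min y$. Since $y_i\ge\min y$ holds for every $i\in[m]$ by definition of $\min y$, this gives $\argmax\nolimits^\epsilon y=[m]$. Next, by the hypothesis $\cons\{\,\sgn a_i\mid i\in[m]\,\}\neq\emptyset$, so the condition appearing inside the infimum \refeq{e} is satisfied for this $\epsilon$. Hence $\epsilon=\max y-\min y$ belongs to the set $\{\,\epsilon'\ge0\mid\cons\{\,\sgn a_i\mid i\in\argmax\nolimits^{\epsilon'}y\,\}\neq\emptyset\,\}$, and therefore $e(y)\le\max y-\min y$.

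There is essentially no obstacle here; the only thing to be careful about is the (routine) unwinding of the definition \refeq{argmax-eps} of $\argmax\nolimits^\epsilon$, and noting that the statement should read $y\in\bb R^m$ rather than $y\in\bb R^n$. I would keep the proof to two or three sentences along the lines above.
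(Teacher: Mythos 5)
Your proof is correct and follows exactly the same route as the paper's: take $\epsilon=\max y-\min y$, note that $\argmax^\epsilon y=[m]$, and invoke the hypothesis to conclude that $\epsilon$ is feasible in the infimum defining $e(y)$. Your version just spells out the unwinding of the definition (and rightly flags the $\bb R^n$/$\bb R^m$ typo), which the paper leaves as "clearly".
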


\begin{proof}
Let $\epsilon=\max y-\min y$. Then clearly $\argmax^\epsilon y=[m]$, hence $\cons\{\,\sgn a_i\mid i\in \argmax\nolimits^\epsilon y\,\}\neq\emptyset$.
\end{proof}

%

\subsection{Convergence}

Using the preparations from~\S\ref{sec:proof-prelim-alg}
and~\S\ref{sec:proof-prelim-pe}, we now prove the main convergence
result.  We will do it by
reformulating~\cite[Theorem~1]{Schlesinger-2011}.

Let $q=p^m$ denote $m$~cycles of coordinate minimization. In this
section, $y_k$ will denote a vector from~$\bb R^m$, rather than the
$k$-th component of a vector $y\in\bb R^m$.  Sequences such as
$(y_k)_{k\in\bb N}$ will be denoted in short as $(y_k)$.

Recall that an {\em accumulation point\/} of a sequence is the limit
point of its convergent subsequence.

\begin{theorem}
\label{th:accum}
Let $q{:}\ \bb R^m\to\bb R^m$ be continuous. Let $y\in\bb R^m$. Let
the sequence $(\max q^k(y))$ be convergent. Then every accumulation
point~$y^*$ of the sequence $(q^k(y))$ satisfies $\max q(y^*)=\max y^*$.
\end{theorem}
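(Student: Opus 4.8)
The plan is to exploit the fact that $q$ is continuous and that the sequence $(\max q^k(y))$ converges, so it is in particular a Cauchy sequence. Let $y^*$ be an accumulation point of $(q^k(y))$, so there is a subsequence $(q^{k_\ell}(y))$ with $q^{k_\ell}(y)\to y^*$. First I would observe that $\max$ is a continuous function on $\bb R^m$, hence $\max q^{k_\ell}(y)\to\max y^*$; since the whole sequence $(\max q^k(y))$ converges, its limit must equal $\max y^*$, i.e.\ $\lim_{k\to\infty}\max q^k(y)=\max y^*$.

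Next, apply $q$ to the convergent subsequence. By continuity of $q$ we get $q^{k_\ell+1}(y)=q(q^{k_\ell}(y))\to q(y^*)$, and then, applying $\max$ (again continuous), $\max q^{k_\ell+1}(y)\to\max q(y^*)$. But $(\max q^{k_\ell+1}(y))_\ell$ is a subsequence of the convergent sequence $(\max q^k(y))$, so it has the same limit, namely $\max y^*$. Therefore $\max q(y^*)=\max y^*$, which is exactly the claim.

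I do not expect any serious obstacle here: the statement is essentially a ``accumulation points of an orbit under a continuous map inherit fixed-point-like behaviour along the tracked scalar quantity $\max$'' argument, and the only ingredients are continuity of $q$, continuity of $\max\colon\bb R^m\to\bb R$, and the hypothesis that $(\max q^k(y))$ is a convergent (hence Cauchy) sequence so that every subsequence converges to the same limit. The one place to be a little careful is bookkeeping with the index shift $k_\ell\mapsto k_\ell+1$: one must note that $\{k_\ell+1\}$ is still a subset of $\bb N$ and that $(\max q^{k_\ell+1}(y))_\ell$ is genuinely a subsequence of the full scalar sequence, so convergence of the latter forces this subsequence to converge to the common limit $\max y^*$. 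Everything else is immediate.

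Note also that this theorem will be combined with the earlier results to prove Theorem~\ref{th:e-converges}: boundedness of $(p^k(y))$ (Proposition~\ref{th:bounded-yk}) guarantees accumulation points exist, monotonicity of $\max$ under $p_j$ (Proposition~\ref{th:monotone}) together with boundedness below gives convergence of $(\max q^k(y))$, and then the conclusion $\max q(y^*)=\max y^*$ combined with Proposition~\ref{th:diffusion-abc}(c) forces $\cons\{\sgn a_i\mid i\in\argmax y^*\}\neq\emptyset$, i.e.\ $e(y^*)=0$; continuity of $e$ (Proposition~\ref{th:e-cont}) then propagates this to the limit $\lim_k e(p^k(y))=0$.
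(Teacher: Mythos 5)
Your proof is correct and follows essentially the same argument as the paper: pass to a convergent subsequence, apply the continuous maps $q$ and $\max$, and use the fact that every subsequence of the convergent scalar sequence $(\max q^k(y))$ shares its limit to equate $\max y^*$ with $\max q(y^*)$. The remark about Cauchyness is superfluous but harmless.
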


\begin{proof}
For brevity, denote $y_k=q^k(y)$. Let $y^*$~be an accumulation point
of $(y_k)$, thus
\begin{equation}
\lim_{l\to\infty} y_{k(l)} = y^*
\label{eq:accum-proof:1}
\end{equation}
for some strictly increasing function $k{:}\ \bb N\to\bb N$. Applying
the continuous map~$q$ to equality~\refeq{accum-proof:1} yields
\begin{equation}
\lim_{l\to\infty} q(y_{k(l)})
= \lim_{l\to\infty} y_{k(l)+1}
= q(y^*) ,
\label{eq:accum-proof:2}
\end{equation}
where we used that $q(y_{k(l)})=y_{k(l)+1}$.  Now
\begin{equation}
\max y^* = \lim_{l\to\infty} \max y_{k(l)} = \lim_{k\to\infty} \max y_k = \lim_{l\to\infty} \max y_{k(l)+1} = \max q(y^*) .
\label{eq:accum-proof:3}
\end{equation}
The first and last equality follow from applying the continuous
function $\max{:}\ \bb R^m\to\bb R$ (defined by~\refeq{max}) to
equalities~\refeq{accum-proof:1} and~\refeq{accum-proof:2}. The second
and third equality hold because the sequence $(\max y_k)$ is
convergent and thus every its subsequence converges to the same
number.
\end{proof}


%
%

The following fact is well-known from analysis:

\begin{proposition}
\label{th:subseq-conv}
Let $(a_k)$ be a bounded sequence. If every convergent subsequence of
$(a_k)$ converges to a point~$a$, then the sequence $(a_k)$
converges to~$a$.
\end{proposition}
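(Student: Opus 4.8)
The plan is to argue by contradiction, using the Bolzano--Weierstrass property of bounded sequences. Suppose the sequence $(a_k)$ does not converge to~$a$. Negating the definition of convergence, there exists $\epsilon>0$ such that $\|a_k-a\|\ge\epsilon$ for infinitely many indices~$k$; collecting these indices in increasing order yields a subsequence $(a_{k_l})_{l\in\bb N}$ with $\|a_{k_l}-a\|\ge\epsilon$ for every~$l$.

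Next I would extract a further convergent subsequence. Since $(a_k)$ is bounded, so is its subsequence $(a_{k_l})$, and by the Bolzano--Weierstrass theorem it has a convergent subsubsequence $(a_{k_{l(r)}})_{r\in\bb N}$, say with limit~$b$. On one hand, $(a_{k_{l(r)}})_r$ is a convergent subsequence of the original sequence $(a_k)$, so by hypothesis $b=a$. On the other hand, passing to the limit $r\to\infty$ in the inequality $\|a_{k_{l(r)}}-a\|\ge\epsilon$ — which is legitimate because the norm (or distance to~$a$) is continuous — gives $\|b-a\|\ge\epsilon>0$. This contradicts $b=a$, and hence $(a_k)$ must converge to~$a$.

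There is no real obstacle here; the only points requiring a little care are that the negation of convergence genuinely produces infinitely many ``bad'' indices (so that a subsequence can be formed), and that a non-strict inequality is preserved under limits, so the limit point~$b$ still lies at distance at least~$\epsilon$ from~$a$. The essential ingredient is merely that a bounded sequence admits a convergent subsequence.
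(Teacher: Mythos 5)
Your proof is correct and follows essentially the same route as the paper's: negate convergence to extract a subsequence staying at distance at least $\epsilon$ from~$a$, apply Bolzano--Weierstrass to get a convergent subsubsequence, and note its limit both must and cannot equal~$a$. Your version merely spells out the final contradiction (passing the inequality $\|a_{k_{l(r)}}-a\|\ge\epsilon$ to the limit) where the paper says it is clear.
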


\begin{proof}
Suppose $(a_k)$ does not converge to~$a$.  Then for some $\epsilon>0$,
for every~$k_0$ there is $k>k_0$ such that $\|a_k-a\|>\epsilon$.  So
there is a subsequence $(b_k)$ such that $\|b_k-a\|>\epsilon$ for
all~$k$.  As $(b_k)$~is bounded, by Bolzano-Weierstrass it has a
convergent subsequence, $(c_k)$. But $(c_k)$ clearly cannot converge
to~$a$, a contradiction.
\end{proof}

\begin{theorem}
\label{th:converge-sc}
Let $q{:}\ \bb R^m\to\bb R^m$ and $e{:}\ \bb R^m\to\bb R$ be
continuous such that, for every $y\in\bb R^m$:
\begin{enumerate}
\item $\max q(y)\le \max y$,
\item $\max q(y)=\max y$ implies $e(y)=0$,
\item the sequences $(q^k(y))$, $(\max q^k(y))$ and $(e(q^k(y)))$ are bounded.
\end{enumerate}
Then for every $y\in\bb R^m$ we have $\displaystyle\lim_{k\to\infty}e(q^k(y))=0$.
\end{theorem}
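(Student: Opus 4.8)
The plan is to argue by contradiction using a "descent" argument on the quantity $\max q^k(y)$, exploiting that this sequence is monotone non-increasing (hypothesis 1) and bounded (hypothesis 3), hence convergent, say to some limit $\mu$. Suppose that $e(q^k(y))$ does not tend to $0$. Since the sequence $(e(q^k(y)))$ is bounded, by Proposition~\ref{th:subseq-conv} it would then have to fail to converge to $0$, so there is $\epsilon>0$ and a subsequence along which $e(q^{k}(y))\ge\epsilon$. Because the sequence $(q^k(y))$ is bounded, by Bolzano--Weierstrass we may pass to a further subsequence $q^{k(l)}(y)\to y^*$ for some $y^*\in\bb R^m$; by continuity of $e$ we get $e(y^*)\ge\epsilon>0$, so in particular $e(y^*)\ne0$.

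Now I would invoke Theorem~\ref{th:accum}: since $q$ is continuous and the sequence $(\max q^k(y))$ is convergent, every accumulation point $y^*$ of $(q^k(y))$ satisfies $\max q(y^*)=\max y^*$. Applying hypothesis~2 to $y^*$ then gives $e(y^*)=0$, contradicting $e(y^*)\ge\epsilon>0$. This contradiction shows $\lim_{k\to\infty}e(q^k(y))=0$.

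The steps in order: (i) observe $(\max q^k(y))$ is monotone (hypothesis 1) and bounded (hypothesis 3), hence convergent; (ii) assume for contradiction that $e(q^k(y))\not\to0$, extract via boundedness of $(e(q^k(y)))$ and $(q^k(y))$ a subsequence $q^{k(l)}(y)\to y^*$ along which $e$ stays bounded away from $0$; (iii) use continuity of $e$ to conclude $e(y^*)>0$; (iv) apply Theorem~\ref{th:accum} to conclude $\max q(y^*)=\max y^*$; (v) apply hypothesis~2 to get $e(y^*)=0$, a contradiction.

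The main obstacle is essentially bookkeeping rather than a genuine difficulty: one must be careful that the convergence of $(\max q^k(y))$ needed to invoke Theorem~\ref{th:accum} really does follow just from monotonicity plus boundedness, and that the accumulation point extracted in step~(ii)---chosen so that $e$ is large along the subsequence---is the very same $y^*$ fed into Theorem~\ref{th:accum}. Everything else is a routine combination of Bolzano--Weierstrass, continuity of $e$, and the cited results; no new estimate is required, since the quantitative work (continuity of $e$, boundedness of the orbit under $0\in\rint\conv\{a_1\...a_m\}$, and the fact that $\max q(y)=\max y$ forces $e(y)=0$) has already been isolated in Propositions~\ref{th:e-cont}, \ref{th:bounded-yk}, \ref{th:bounded-e} and Proposition~\ref{th:diffusion-abc}.
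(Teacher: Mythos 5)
Your proposal is correct and follows essentially the same route as the paper: convergence of $(\max q^k(y))$ from monotonicity and boundedness, Theorem~\ref{th:accum} to show every accumulation point $y^*$ satisfies $\max q(y^*)=\max y^*$, hypothesis~2 to get $e(y^*)=0$, and continuity of $e$ with Bolzano--Weierstrass on the bounded orbit. The only difference is cosmetic: you package the final step as a proof by contradiction (extracting a subsequence on which $e$ stays above $\epsilon$), whereas the paper reaches the same conclusion by showing every convergent subsequence of $(e(q^k(y)))$ tends to $0$ and invoking Proposition~\ref{th:subseq-conv}.
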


\begin{proof}
Denote $y_k=q^k(y)$. The sequence $(\max y_k)$ is bounded and
non-increasing, therefore convergent. By Theorem~\ref{th:accum}, every
accumulation point~$y^*$ of $(y_k)$ satisfies $\max q(y^*)=\max y^*$. This
implies $e(y^*)=0$. 

By Proposition~\ref{th:subseq-conv}, now it suffices to show that
every convergent subsequence of the sequence $(e(y_k))$ converges
to~0.  So let $(z_k)$ be a subsequence of $(y_k)$ such that
$\lim_{k\to\infty}e(z_k)=e^*$.
\begin{itemize}
\item If $(z_k)$ is convergent, then $y^*=\lim_{k\to\infty}z_k$ is an
accumulation point of $(y_k)$, therefore $e(y^*)=0$. Applying the continuous
function~$e$ to this limit yields
$e(y^*)=\lim_{k\to\infty}e(z_k)=e^*=0$.
\item If $(z_k)$ is not convergent, by Bolzano-Weierstrass it has a
convergent subsequence, $(w_k)$. As $(w_k)$~is also a subsequence of
$(y_k)$, by the above reasoning we have
$\lim_{k\to\infty}e(w_k)=0$. But because $\lim_{k\to\infty}e(z_k)=e^*$,
every subsequence of $(e(z_k))$ converges to~$e^*$. As $(w_k)$~is a
subsequence of~$(z_k)$, this implies $e^*=0$. \qedhere
\end{itemize}
\end{proof}

%
%
%



Theorem~\ref{th:converge-sc} implies
Theorem~\ref{th:e-converges}. Indeed, map~$q$ is continuous because it
is a composition of maps~$p_j$ which are continuous by
Proposition~\ref{th:pj-cont}. Function~$e$ is continuous by
Proposition~\ref{th:e-cont}. Condition~1 holds by
Proposition~\ref{th:monotone} and Condition~2 by
Proposition~\ref{th:diffusion-abc}(c). The sequences $(q^k(y))$ and
$(\max q^k(y))$ are bounded by Proposition~\ref{th:bounded-yk}. The
sequence $(e(q^k(y)))$ is bounded by Proposition~\ref{th:bounded-e}. 

We remark that Theorem~\ref{th:converge-sc} has a wider applicability,
to prove convergence to local consistency for several other
message-passing algorithms. For that, the functions $\max$, $q$
and~$e$ need to be replaced by appropriate functions in these
algorithms and they must satisfy the assumptions of he theorem.

\subsection*{Acknowledgement}
This work has been supported by the Czech Science Foundation grant 16-05872S.

\bibliographystyle{abbrv}
\bibliography{/home/werner/publications/bib/werner}

\end{document}